\newcommand{\pr}{\mathbb{P}}
\newcommand{\R}{\mathbb{R}}
\newcommand{\N}{\mathcal{N}}
\newcommand{\D}{\mathcal{D}}
\DeclarePairedDelimiterX{\norm}[1]{\lVert}{\rVert}{#1}
\DeclarePairedDelimiterX{\abs}[1]{\lvert}{\rvert}{#1}
\DeclarePairedDelimiterX{\0norm}[1]{\lVert}{\rVert_{0}}{#1}
\DeclarePairedDelimiterX{\1norm}[1]{\lVert}{\rVert_{1}}{#1}
\DeclarePairedDelimiterX{\2norm}[1]{\lVert}{\rVert_{2}}{#1}
\DeclarePairedDelimiterX{\nnorm}[1]{\lVert}{\rVert_{n}}{#1}
\DeclarePairedDelimiterX{\2nnorm}[1]{\lVert}{\rVert_{n}^2}{#1}
\newcommand{\hf}{\hat{f}}
\newcommand{\e}{\epsilon}
\let\oldsqrt\sqrt
\def\sqrt{\mathpalette\DHLhksqrt}
\def\DHLhksqrt#1#2{%
\setbox0=\hbox{$#1\oldsqrt{#2\,}$}\dimen0=\ht0
\advance\dimen0-0.2\ht0
\setbox2=\hbox{\vrule height\ht0 depth -\dimen0}%
{\box0\lower0.4pt\box2}}
\newtheoremstyle{break}
  {\topsep}{\topsep}%
  {\itshape}{}%
  {\bfseries}{}%
  {\newline}{}%
\theoremstyle{break}
\newtheorem{definition}{Definition}[section]
\newtheorem{lemma}[definition]{Lemma}
\newtheorem{theorem}[definition]{Theorem}
\newtheorem{Coro}[definition]{Corollary}
\newtheorem{remark}{Remark}[section]
\begin{document}
\begin{frontmatter}

\title{Prediction bounds for higher order total variation regularized least squares}
\runtitle{Higher order total variation}


\author{\fnms{Francesco} \snm{Ortelli}\corref{}\ead[label=e1]{fortelli@ethz.ch}}
\address{R\"{a}mistrasse 101\\ 8006 Z\"{u}rich \\ \printead{e1}}
\and
\author{\fnms{Sara} \snm{van de Geer}\ead[label=e2]{geer@ethz.ch}}
\address{R\"{a}mistrasse 101\\ 8006 Z\"{u}rich \\ \printead{e2}}

\affiliation{Seminar for Statistics, ETH Z\"{u}rich}

\runauthor{Ortelli, van de Geer}

\begin{abstract}
We establish adaptive results for trend filtering: least squares estimation 
with a penalty on the total variation of 
$(k-1)^{\rm th}$ order differences.
Our approach is based on combining a general  oracle inequality for
the $\ell_1$-penalized least squares estimator
with ``interpolating vectors" to upper-bound the ``effective sparsity". This allows one to show that  the $\ell_1$-penalty on the $k^{\text{th}}$ order differences leads to an estimator that can adapt to the number of jumps in the $(k-1)^{\text{th}}$ order differences of the underlying signal
or an approximation thereof. We show the result for $k \in \{1,2,3,4\}$ and indicate how
it could be derived for general $k\in \mathbb{N}$.
\end{abstract}

\begin{keyword}[class=MSC]
\kwd[Primary ]{62J05}
\kwd[; secondary ]{62J99}
\end{keyword}

\begin{keyword}
\kwd{Oracle inequality}
\kwd{Projection}
\kwd{Compatibility}
\kwd{Lasso}
\kwd{Analysis}
\kwd{Total variation regularization}
\kwd{Minimax}
\kwd{Moore Penrose pseudo inverse}
\end{keyword}

\end{frontmatter}
\section{Introduction}\label{dualsec01}

Total variation penalties have been introduced by \cite{rudi92} and \cite{stei06}.
The present paper builds further on the theory as developed in \cite{tibs14}, \cite{wang16},  and \cite{gunt17}.
We show, for $k \in \{ 1,2,3, 4\}$, a method for proving that  the $k^{\text{th}}$ order total variation regularized least squares estimator adapts to the number of jumps in the $(k-1)^{\text{th}} $ order differences and indicate
how this method could be generalized to any $k \in {\mathbb N}$. 
Inspired by \cite{cand14}, our main tool is a  well-chosen vector interpolating the signs of the jumps.

The estimation method we will study is known as trend filtering. See \cite{Ryan2020} for a comprehensive overview
and connections. Trend filtering 
 is a special case of the Lasso (\cite{tibs96}): it is  least squares estimation with an $\ell_1$-penalty on
a subset of the coefficients. For trend filtering,
the minimization problem can also be formulated as a so-called analysis problem 
(\cite{elad07}) with the analysis matrix $D$ being
the $k^{\rm th}$ order differences operator (see Equation (\ref{Delta.equation})).
Our main result  problem, given in  Theorem \ref{dualthm101},
 is based on an oracle inequality for the general analysis problem with
 arbitrary analysis matrix $D\in \R^{m \times n}$, as given in Theorem \ref{dualthm45}.
 The latter is a modification of results in 
\cite{dala17}: we generalize their projection arguments by allowing for adding ``mock" variables to the active set.
We furthermore use an improved version of their  ``compatibility constant" 
(see Remark \ref{compatibility.remark}) and - up to  scaling - refer to its
 reciprocal as ``effective sparsity'', see Definition \ref{dualdef41}. The effective sparsity for the Lasso problem is 
 the number of active parameters (the sparsity) discounted by a factor due to correlations between variables.
 This discounting factor is called the compatibility constant (see Remark \ref{compatibility.remark}). 
 In our situation the effective sparsity  can be dealt with invoking
 what we call an ``interpolating
 vector"  (see Definition \ref{interpolating.definition}) which can be seen as a quantified noisy version of the so-called dual certificate used in basis pursuit. See Remark \ref{candes.remark} for more details. 

 Consider an $n$-dimensional Gaussian vector of independent observations $Y \sim{\cal N}_n (f^0, I)$ with unknown mean vector $f^0\in \R^n$, and with known variance ${\rm var}  (Y_i) = 1$, $i=1 , \ldots , n $
(see Remark \ref{compatibility.remark} for the case of
unknown variance). Let $D\in \R^{m \times n } $ be a given analysis matrix.
The analysis estimator is
\begin{equation}\label{analysis.equation}
\hat{f}:= \arg \min_{f \in \R^n} \biggl \{ \| Y - f \|_n^2  + 2 \lambda \| D f \|_1 \biggr \}, 
 \end{equation}
 where we invoke the (abuse of) notation $\| v \|_n^2:= \sum_{i=1}^n v_i^2 / n $, $v \in \R^n$. 
The general aim is to show that $\hat f$ is close  to the mean $f^0 :=\mathbb{E} Y$ of $Y$, or to some approximation $f \in \R^n$ thereof that has $\| D{f} \|_0$ ``small".

 The trend filtering problem has as analysis matrix $D$ the $k^{\text{th}}$ order differences
 operator $\Delta(k)\in  \R^{(n-k)\times n}$, which is defined as
\begin{equation}\label{Delta.equation}
\Delta(k)_{ij}:= \begin{dcases} (-1)^l\binom{k}{l}, & j=i-l, \ l \in [0: k],\  i \in \D,\\
0, & \text{else},
\end{dcases}
\end{equation}
where $\D =[k+1: n]$ and  $ k \in [1:n-1]$ is fixed. 
We alternatively call $\Delta(k) $ the $k^{\rm th}$ order discrete derivative operator.
Moreover, we apply the notation
$$ [a:b ] = \{ j \in {\mathbb N}: a \le j \le b \} , \ 0 \le a \le b < \infty . $$
Theorem \ref{dualthm45} below presents results for the general
analysis problem and we apply it in Theorem \ref{dualthm101} to the trend filtering problem. This application means 
that we need to introduce a ``dictionary"
as described in Subsection \ref{general-dictionary.section}, to bound the lengths of
the dictionary vectors, and finally calculate an interpolating vector to obtain a bound for the effective sparsity.
 We do the calculations for $k \in \{ 1,2,3,4\}$ and sketch the way to proceed for general $k \in {\mathbb N}$. 
 
 \subsection{Related work}\label{related.section}
Total variation regularization and trend filtering have been studied from different
angles in a variety of papers. The paper \cite{mamm97-2} studies numerical adaptivity and rates of convergence.
In \cite{kim2009ell_1} it is shown that
interior point methods work well for trend filtering.
The paper \cite{tibs14} clarifies connections with splines
and also has minimax rates.  In \cite{wang16} trend filtering on graphs is examined
and it has theoretical error bounds in terms  of
the $\ell_1$-norm $\| D f \|_1$. 
 The paper \cite{sadh17} contains theory for additive
models with trend filtering. In  \cite{sadhanala2017higher} trend filtering in higher dimensions
is studied and minimax rates are proved.
The paper \cite{chatterjee2019adaptive} proposes a recursive partitioning
scheme for higher dimensional trend filtering.
 Our work  is closely related to the paper \cite{gunt17} which  concerns the constrained problem as well
as the penalized problem. 
 Our results for the penalized problem with $k\in \{ 2,3,4 \} $ improve those in \cite{gunt17}.
As a special case we derive that under a ``minimum length condition" saying that the distances between jumps of the $(k-1)^{\rm th}$
 discrete derivative are all of the same order, and under an appropriate condition
 on the tuning parameter $\lambda$,  the prediction error of the penalized least squares estimator
 is of order $(s_0+1) \log (n / (s_0 +1)) \log n/n$ where $s_0$ is the number of jumps of
 $\Delta (k-1) f^0$  (see Corollary \ref{dualthm11}).
 This is an improvement
 of the result in \cite{gunt17} where the rate is shown to be $(s_0+1)^{2k}/ n$ for $k \ge 2$.
 In fact, we show a more general result where $f^0$ may be replaced by 
 a sparse approximation.
 For $k=1$ we show the result with a superfluous log-factor: it is known that in that case
 the rate of convergence for the prediction error is of order
 $(s_0 +1) \log(n/(s_0+1) )/n$, see \cite{gunt17} and its references. Our extra log-factor is is due to the use of projection
 arguments instead of more refined empirical process theory. In 
 \cite{vdG2020}
 it is shown that the log-factor for $k=1$ can be removed when invoking entropy arguments  instead of projections,
 while keeping the approach via interpolating vectors and effective sparsity.
 
 The approach with interpolating vectors is in our view quite natural and lets itself be extended to
 other problems. We discuss this briefly in the concluding section, Section \ref{dualsec12}. 

 \subsection{Organization of the paper}
In the next subsection, Subsection \ref{mainresult.section},  we present in Theorem
\ref{dualthm101} an adaptive result for trend filtering, where adaptivity means that the presented bound
for the prediction error can be smaller when $f^0$ can be well approximated by
a vector with fewer jumps in  its $(k-1)^{\rm th}$ discrete derivative.
Section \ref{general.section} presents in Theorem \ref{dualthm45} adaptive and non-adaptive bounds for the general
analysis problem which will be our starting point for proving Theorem \ref{dualthm101}.
We introduce effective sparsity and interpolating vectors in Definitions \ref{dualdef41} and
\ref{interpolating.definition}. 

Section \ref{application.section} applies the general result of Theorem \ref{dualthm45} to the case $D= \Delta(k)$. 
We then need to introduce a projected dictionary for trend filtering, which is done in 
Subsection \ref{dictionary.section}. With this we arrive at non-adaptive, almost minimax rates
in Theorem \ref{dualthm12}. In Subsection \ref{effective-sparsity.section} we construct interpolating vectors and bounds for
the effective sparsity for the case $k \in \{1, 2,3,4\}$ and also sketch how this can be
done for general $k$. With these results in hand we finish in Subsection \ref{proofmain.section} the proof
of the adaptive bounds for trend filtering with $k \in \{ 1,2,3,4 \}$. 
Section \ref{dualsec12} concludes the paper.  

The appendix contains a proof of Theorem \ref{dualthm45}. Its arguments are to a large extent in \cite{dala17} and 
\cite{orte19-2}, but there are modifications. The appendix also has the proofs for Subsection \ref{dictionary.section} and
\ref{effective-sparsity.section}.

  \subsection{Main result for trend filtering}\label{mainresult.section}
 
 For $D=\Delta (k)$ and $\D = [k+1 : n ]$
we let $S  =\{t_1, \ldots, t_s\} \subseteq \D, \ k+1 \le t_1 < \ldots < t_s \le n$ and let $t_0:= k$ and $t_{s+1}:= n+1$. We  define $n_i=t_i-t_{i-1}, \ i \in [1:s+1]$ and $n_{\rm max}:= \max_{1 \le i \le s+1} n_i $. 
Moreover, for $f \in \R^n$ we write $(\Delta(k) f)_{-S} := \{ ( \Delta (k) f)_j : \ j \in \D \backslash S \} $. 

In Theorem \ref{dualthm101} below, the set $S$ is fixed but arbitrary. The theorem presents an oracle inequality that
allows for a trade-off between approximation error and estimation error by choosing $S$ and $f$ appropriately,
depending on the unknown $f^0$. However, the tuning parameter will then depend on $s$. Remark
\ref{lambda.remark} reverses this viewpoint.

Write for $u>0$,
$$\lambda_0 (u):= \sqrt{2\log(2(n-k-s))+2u  \over n} . $$

\begin{theorem}[Adaptive rates for $k=1, 2, 3,4$]\label{dualthm101} 
Let $k \in \{1 , 2 , 3,4 \}$. There exists constants $c_k$ and $C_k$  depending only on $k$ such that the following holds. \\
Let $u>0$ be arbitrary and choose the tuning parameter $\lambda$ satisfying
$$ \lambda \ge c_k n^{k-1} \biggl ( { n_{\max} \over 2 n}\biggr ) ^{2k-1 \over 2}  \lambda_0(u).  $$
Let $f\in \R^n$ be arbitrary
and define the signs 
$$q_{t_i}: = {\rm sign} (Df)_{t_i} , \ i=[1 : s] . $$
Write
$S^{\pm} := \{ i \in [2: s] : q_{t_{i} } q_{t_{i-1}} = -1 \} \cup \{1 , s+1 \} $. 
Assume $n_ i\ge k(k+2) $ for all $i \in S^{\pm} $. Finally, let $v>0$ be arbitrary.
Then with probability at least $1-e^{-u}-e^{-v}$ we have
\begin{eqnarray*}
\norm{\hf-f^0}^2_n &\le& \underbrace{\norm{f-f^0}^2_n}_{\mbox{\it ``approximation error"}}  + 4 \lambda \norm{( \Delta(k)f)_{-S}} _1\\
&+&  \underbrace{ \left( \sqrt{\frac{k(s+1)}{n}} + \sqrt{ \frac{2v}{n}}+  \lambda \Gamma_S \right)^2}_{\mbox{\it ``estimation error"}}
\end{eqnarray*}
where
\begin{equation} \label{gammabound.equation}
 \Gamma_S^2  = n C_k \left (  \sum_{i \in S^{\pm}  } { 1 + \log n_i \over n_i^{2k-1}} +
\sum_{i \in S \backslash S^{\pm} }  { 1+ \log n_i \over n_{\rm max}^{2k-1}   } \right ) .
\end{equation} 
\end{theorem}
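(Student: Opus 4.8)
The plan is to derive the statement as a corollary of the general analysis oracle inequality, Theorem \ref{dualthm45}, applied with $D = \Delta(k)$ and the fixed active set $S = \{t_1,\dots,t_s\}$. First I would invoke that theorem: on the event of probability at least $1 - e^{-u} - e^{-v}$ (the $e^{-u}$ governing the event $\{\lambda_0(u) \ge \|$noise projected onto the relevant dictionary directions$\|_\infty\}$ and the $e^{-v}$ governing the Gaussian-width/$\chi^2$-type term $\sqrt{k(s+1)/n} + \sqrt{2v/n}$ coming from projection onto the $(k(s+1))$-dimensional space spanned by $S$ together with the boundary ``mock'' variables), the general bound reads $\|\hat f - f^0\|_n^2 \le \|f - f^0\|_n^2 + 4\lambda\|(\Delta(k)f)_{-S}\|_1 + (\sqrt{k(s+1)/n} + \sqrt{2v/n} + \lambda\sqrt{\Gamma(S)})^2$, where $\Gamma(S)$ is the effective sparsity of Definition \ref{dualdef41}. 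So the entire content of the theorem reduces to proving the bound \eqref{gammabound.equation} on $\Gamma_S^2$, and to checking that the hypothesis on $\lambda$ is exactly what Theorem \ref{dualthm45} requires once the dictionary vectors of Subsection \ref{dictionary.section} have been rescaled.

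Second, I would recall the upper bound for the effective sparsity in terms of an interpolating vector (Definition \ref{interpolating.definition}): if $w$ is any vector that interpolates the sign pattern $q = (q_{t_1},\dots,q_{t_s})$ on $S$ — i.e.\ $(\text{something like } D^{\rm T}\text{-adjoint applied to } w)$ matches $q$ on $S$ and is controlled off $S$ — then $\Gamma(S)$ is bounded by a quadratic form in $w$ measured against the (projected) dictionary. The key structural observation, already flagged by the definition of $S^{\pm}$, is that the interpolation problem decouples over the ``blocks'' delimited by the points $t_i$ where consecutive signs agree: only at a genuine sign change $q_{t_i}q_{t_{i-1}} = -1$ (or at the two endpoints) must the interpolant actually turn around, and between two such points it can be taken monotone/affine-in-the-appropriate-sense so that its contribution is negligible. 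This is precisely the dichotomy in \eqref{gammabound.equation}: blocks $i \in S^{\pm}$ contribute $(1+\log n_i)/n_i^{2k-1}$, while the ``easy'' blocks $i \in S\setminus S^{\pm}$ only contribute $(1+\log n_i)/n_{\max}^{2k-1}$, the larger denominator reflecting that the interpolant there is stretched over the whole available length.

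Third — and this is the computational heart, to be carried out separately for $k \in \{1,2,3,4\}$ in Subsection \ref{effective-sparsity.section} — I would explicitly construct the interpolating vector $w$ block by block. On each block of length $n_i$ one needs a function whose $k$-th order discrete antiderivative (equivalently, a degree-$(2k-1)$ spline type object, mirroring the dual-certificate construction of \cite{cand14}) hits the prescribed signs at the block endpoints with the right normalization and stays bounded by $1$ in between; a natural candidate is built from a scaled and shifted polynomial of degree $2k-1$ (a Bernstein/Chebyshev-like choice), and its squared ``energy'' against the dictionary then scales like $n_i^{-(2k-1)}$ up to the logarithmic factor coming from summing the discrete dictionary lengths. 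The requirement $n_i \ge k(k+2)$ for $i \in S^{\pm}$ is exactly the room needed for such a polynomial interpolant to exist with the sign constraint respected on a block that must contain a turn. Summing the block contributions gives \eqref{gammabound.equation} with a constant $C_k$ depending only on the degree $2k-1$ of the interpolating polynomial, and the scaling of the dictionary vectors (lengths of order $n^{k-1}(n_{\max}/n)^{(2k-1)/2}$, see Subsection \ref{dictionary.section}) is what produces the prefactor $c_k n^{k-1}(n_{\max}/(2n))^{(2k-1)/2}$ in the condition on $\lambda$.

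The main obstacle is the explicit per-block construction of the interpolating vector for $k \ge 2$: one must exhibit a degree-$(2k-1)$ polynomial that simultaneously (i) matches the given signs at the two ends of the block after applying the $k$-fold discrete summation operator, (ii) has all intermediate values bounded by $1$, and (iii) has controlled energy against the \emph{projected} dictionary (projection onto the orthogonal complement of the low-degree polynomial directions, which is what makes the analysis-matrix formulation work). Verifying (ii) — the uniform bound — is delicate because it is a genuine polynomial-inequality statement that gets harder as $k$ grows, which is exactly why the theorem is proved only for $k \in \{1,2,3,4\}$ and merely sketched for general $k$. Everything else — plugging into Theorem \ref{dualthm45}, the decoupling over blocks, and the bookkeeping of constants — is routine once the interpolants are in hand.
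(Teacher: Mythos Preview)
Your scaffolding is right and matches the paper: invoke Theorem \ref{dualthm45} with $D=\Delta(k)$, augment $\N_{-S}$ by $(k-1)s$ mock variables so that $\bar r_S=k(s+1)$, bound the dictionary column lengths (Subsection \ref{dictionary.section}) to translate the $\lambda$-condition into the displayed one, and then bound the effective sparsity via Lemma \ref{interpolating.lemma} by exhibiting a block-wise interpolating vector $q$.

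Where your description goes off is in the form of the interpolating vector and the source of the $\log$-term. The vector $q$ is \emph{not} built from a degree-$(2k-1)$ polynomial, and the constraint is not that a $k$-fold antiderivative hits the signs. Rather, $q$ itself must equal $q_{t_i}$ at each $t_i\in S$ and satisfy the \emph{noisy} bound $|q_j|\le 1-w_j$ for $j\in\D\setminus S$, where $w_j=\|\psi_j^{-S}\|_n\lambda_0(u)/\lambda$. Since $\|\psi_{t_{i-1}+j}^{-S}\|_2^2\lesssim \min(j,n_i-j)^{2k-1}$ by Lemma \ref{duallem39}, the weights $w_{t_{i-1}+j}$ behave like $(j/n_i)^{(2k-1)/2}$ near the block endpoints, and this \emph{forces} $q$ to leave $\pm1$ at the half-integer rate $(2k-1)/2$, not at an integer polynomial rate. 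The paper therefore takes $q_{t_{i-1}+j}=q_{t_{i-1}}\bigl(1-a_0(j/n_i)^{(2k-1)/2}\bigr)$ on the local boundary sub-intervals, glued to degree-$k$ polynomials on interior sub-intervals by discrete derivative matching (Subsection \ref{effective-sparsityk.section}). The logarithmic factor in \eqref{gammabound.equation} then comes from Lemma \ref{sqrtj.lemma}: $(\Delta(k)j^{(2k-1)/2})^2\asymp 1/j$, so $\|\Delta(k)q\|_2^2$ picks up a harmonic sum over each block. It is \emph{not} a byproduct of ``summing the discrete dictionary lengths'' as you suggest. The minimum-length condition $n_i\ge k(k+2)$ is needed so that each of the $k+2$ (or $k+1$) local sub-intervals contains at least $k$ points, making the discrete derivative matching well-posed; and the open issue for general $k$ is not a polynomial sign inequality but rather verifying that the glued piecewise construction stays monotone between consecutive active points, which the paper checks by hand for $k\in\{1,2,3,4\}$.
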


To prove this result, we will invoke Theorem \ref{dualthm45}. This requires providing a 
dictionary and bounding the effective sparsity
given by Definition \ref{dualdef41}. In Subsection \ref{proofmain.section} we then put the pieces together.

%

\begin{remark} The quantity $\Gamma_S^2$ in the above theorem is a bound for the
effective sparsity.
\end{remark} 

\begin{remark} One may take $c_1=c_2=2$.  For $\min_{i \in S^{\pm}}  n_i \rightarrow \infty$
asymptotic expressions for  $c_3$ and $c_4$ can be taken to be
$c_3 \rightarrow 19/2$ and by numerical computation, $c_4 \rightarrow 2\times 6^{7/2} / (18.62)\approx 56.83$.
 See Subsection
\ref{effective-sparsityk.section}. 
\end{remark}

\begin{remark}\label{lambda.remark}
The requirement for $\lambda$ depends on $S$. Therefore, given a fixed data-independent $\lambda$, the above theorem holds for the restricted selection of active sets $S$ having $n_{\max}$ upper bounded as 
$$ n_{\max} \le \left(\frac{\sqrt n\lambda}{c_k \lambda_0 (u) } \right)^{\frac{2}{2k-1}}. $$
\end{remark}

\begin{remark}
In Section \ref{effective-sparsityk.section}, we  indicate how the bound of Theorem \ref{dualthm101} 
could be established for general $k\in {\mathbb N}$.
\end{remark}

We formulate a corollary for the case where 
the distances between jumps are all of the same order as the maximal distance $n_{\max}$.
To facilitate the statement we give an asymptotic formulation. 
For sequences $\{ a_n \}$ and $\{ b_n\} $ in $(0, \infty) $  we use the notation $a_n = {\mathcal O} (b_n)$
if $\limsup_{n \rightarrow \infty} a_n / b_n < \infty $ and $a_n \asymp b_n$ if also
$b_n / a_n = {\mathcal O} (1)$. For a sequence of random variables $\{ Z_n \}$ we write
$Z_n = {\mathcal O}_{\mathbb P } (1) $ if $\lim_{M \rightarrow \infty} \limsup_{n \rightarrow \infty} {\mathbb P}  ( | Z_n | > M ) =0 $.

\begin{Coro}\label{dualthm11}
Fix $k \in \{1,2,3,4\}$. Choose $S$ such that
$$ \min_{i \in [1: s+1]}  n_i \asymp n_{\max} . $$
Then we can choose $\lambda$ of order
$$\lambda\asymp n^{k-1} \biggl ( { 1 \over s+1} \biggr )^{2k-1 \over 2} \sqrt { \log n \over n} $$
and with this choice, for all $f \in \R^n$, 
\begin{eqnarray*}
 \norm{\hat{f}-f^0}^2_n &\le&  \norm{f-f^0}^2_n + 4 \lambda \norm{(\Delta(k)f)_{-S}}_1 \\
 &+&  {\mathcal O}_{\mathbb P}  \left( \frac{s+1} {n}  \log (n/ (s+1)) \log n\right).
 \end{eqnarray*}
\end{Coro}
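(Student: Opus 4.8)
The plan is to specialize Theorem \ref{dualthm101} under the ``minimum length condition'' $\min_{i \in [1:s+1]} n_i \asymp n_{\max}$ and track how each term in the oracle inequality behaves. First I would observe that the condition forces $n_{\max} \asymp n/(s+1)$: indeed $\sum_{i=1}^{s+1} n_i = n$, so the average gap is $n/(s+1)$, and if the minimum gap is of the same order as the maximum then all gaps, in particular $n_{\max}$, are $\asymp n/(s+1)$. Plugging $n_{\max} \asymp n/(s+1)$ into the lower bound for $\lambda$ in Theorem \ref{dualthm101}, with $u = \log n$ say (so that $\lambda_0(u) \asymp \sqrt{\log n / n}$ and the exceptional probability $e^{-u}$ is negligible), gives exactly the stated requirement
$$ \lambda \asymp n^{k-1} \left( \frac{1}{s+1} \right)^{\frac{2k-1}{2}} \sqrt{\frac{\log n}{n}}, $$
since $n^{k-1} (n_{\max}/(2n))^{(2k-1)/2} \asymp n^{k-1} (1/(s+1))^{(2k-1)/2}$. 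I would also check that the hypothesis $n_i \ge k(k+2)$ for $i \in S^{\pm}$ is implied (for $n$ large) by the minimum length condition when $s+1 = o(n)$, or simply absorb it into the asymptotic statement.

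Next I would estimate the estimation-error term $\big( \sqrt{k(s+1)/n} + \sqrt{2v/n} + \lambda \Gamma_S \big)^2$. Taking $v = \log n$ makes $\sqrt{2v/n}$ of the same order as the deterministic-looking pieces and makes $e^{-v}$ negligible, so that the whole bound holds with probability tending to one, which is what $\mathcal{O}_{\mathbb P}(1)$ requires. The first summand contributes $\mathcal{O}((s+1)/n)$. For $\lambda \Gamma_S$ I use \eqref{gammabound.equation}: under $\min_i n_i \asymp n_{\max}$ every gap satisfies $n_i \asymp n/(s+1)$, so both sums in $\Gamma_S^2$ are, up to constants, at most $(s+1) \cdot \frac{1 + \log(n/(s+1))}{(n/(s+1))^{2k-1}}$, giving
$$ \Gamma_S^2 \;\lesssim\; n \cdot (s+1) \cdot \frac{\log(n/(s+1))}{(n/(s+1))^{2k-1}} \;=\; (s+1)^{2k} \, n^{2-2k} \log(n/(s+1)). $$
Multiplying by $\lambda^2 \asymp n^{2k-2} (s+1)^{-(2k-1)} \log n / n$ yields $\lambda^2 \Gamma_S^2 \lesssim \frac{s+1}{n} \log(n/(s+1)) \log n$, which dominates the $(s+1)/n$ term and matches the claimed rate. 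Squaring the sum of the three pieces only changes constants, so the estimation error is $\mathcal{O}_{\mathbb P}\big( \frac{s+1}{n} \log(n/(s+1)) \log n \big)$.

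Finally I would note that the approximation term $\|f - f^0\|_n^2$ and the term $4\lambda \|(\Delta(k)f)_{-S}\|_1$ are simply carried over verbatim from Theorem \ref{dualthm101} (the corollary keeps ``for all $f \in \R^n$''), so nothing needs to be done there beyond recording the order of $\lambda$ already computed. Assembling the three contributions gives the stated inequality. The only mildly delicate point — and the main ``obstacle'' — is bookkeeping the probabilistic statement: one has to choose $u$ and $v$ (e.g.\ both $\asymp \log n$) so that the failure probability $e^{-u} + e^{-v} \to 0$ while simultaneously $\lambda_0(u) \asymp \sqrt{\log n/n}$ and $\sqrt{2v/n}$ stays within the target rate; this is what converts the high-probability bound of Theorem \ref{dualthm101} into the $\mathcal{O}_{\mathbb P}$ statement. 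Everything else is substitution of $n_{\max} \asymp n/(s+1)$ into the formulas already established.
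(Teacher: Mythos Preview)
Your proposal is correct and follows exactly the route the paper intends: the paper states the corollary without a separate proof, treating it as an immediate specialization of Theorem \ref{dualthm101} under the minimum length condition, and your argument carries out precisely that specialization (deriving $n_{\max}\asymp n/(s+1)$, plugging into the bound on $\lambda$ and into \eqref{gammabound.equation}, and choosing $u,v\asymp\log n$ to pass to an $\mathcal{O}_{\mathbb P}$ statement). There is nothing to add.
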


\begin{remark} The requirement of Theorem \ref{dualthm101} on the tuning parameter is more flexible than
the one in  \cite{gunt17}. For example, in the context of the  above corollary, 
\cite{gunt17} require $\lambda/ n^{k-1} \asymp \sqrt {\log (n / (s+1) ) / n}  $.
If we use this latter choice of the tuning parameter we find from Theorem \ref{dualthm101}
\begin{eqnarray*}
 \norm{\hat{f}-f^0}^2_n &\le&  \norm{f-f^0}^2_n + 4 \lambda \norm{(\Delta(k)f)_{-S}}_1 \\
 &+&  {\mathcal O}_{\mathbb P}  \left( \frac{(s+1)^{2k}} {n}  \log (n/ (s+1)) \log n\right).
 \end{eqnarray*} 
 Thus, up to log-terms Theorem \ref{dualthm101} recovers the result of \cite{gunt17}, with $f = f^0$.  We however
 obtain an  improvement because we allow for a smaller tuning parameter.
 Moreover, we allow for a sparse approximation $f$ of $f^0$ and present a sharp oracle inequality.
  The paper  \cite{gunt17}, also studies the constrained problem where they arrive at rates which are
 up to log-terms comparable
 to ours for the regularized problem when taking $f= f^0$.  
\end{remark} 

\begin{remark}\label{sigma.remark}
Our method of proof is along the lines of \cite{dala17}.
 In \cite{orte19-2} it is shown that one can also use this method for the square-root Lasso.
 This means that as a corollary of \cite{orte19-2} our result also hold for  ``square-root" trend filtering, with a ``universal" choice for the tuning parameter
 in the sense that it does not depend on the variance of the noise. 
 \end{remark}


\section{Adaptive bounds for the general analysis estimator}\label{general.section} 

Recall the analysis problem 
\begin{equation}
\hat{f}:= \arg \min_{f \in \R^n} \biggl \{ \| Y - f \|_n^2  + 2 \lambda \| D f \|_1 \biggr \}
 \end{equation}
where $D \in \R^{m \times n}$ is a given analysis operator, $\lambda >0 $ is
a tuning parameter and $\norm{v}^2_n:= \norm{v}^2_2/n$, $v \in \R^n$.

%
%
%

To be able to state Theorem \ref{dualthm45} -
a modification of results in \cite{dala17} (see Remark \ref{compatibility.remark}) -
we introduce some notation in Subsection \ref{benchmark.section}, and then describe the dictionary 
(Subsection \ref{general-dictionary.section}) and the
effective sparsity (Subsection \ref{general-effective-sparsity.section}). 
Theorem \ref{dualthm45} can then be found in Subsection \ref{general-main.section}.
To apply it one needs to upper-bound the effective sparsity.
This is done in Lemma \ref{interpolating.lemma}, which invokes
interpolating vectors as defined in Definition \ref{interpolating.definition}
of Subsection \ref{interpolating.section}. 
Theorem \ref{dualthm45} and  Lemma \ref{interpolating.lemma} combined serve as starting point for proving the result for
trend filtering in Theorem \ref{dualthm101}.

\subsection{Some notation} \label{benchmark.section}
The rows of the  analysis matrix $D \in \R^{m \times n}$ are indexed by a set $\D$ of size $\abs{\D}= m$. 
We consider a set $S \subseteq \D$, which is arbitrary and can be chosen as the active set  of an ``oracle" that trades off ``approximation error" and 
``estimation error" (see Theorem \ref{dualthm45}). The size of $S$ is denoted by $s:= |S|$.  We define for a vector $b_{\D} \in \R^m$ with index set $\D$
$$ b_S := \{ b_j \}_{j \in S } , \ b_{-S} := \{ b_j \}_{j  \in \D \backslash S} . $$
We let $ \N_{-S} : = \{ f \in \R^n :\ (D f)_{-S}  =0 \} = \{ f\in \R^n : \ (Df)_j =0 \ \forall \ j \in \D  \backslash S \} $ and write
$r_S := {\rm dim} ({\cal N}_{-S} )$. As benchmark for our result, consider the active set $S_0 :=
\{ j:\ (Df^0)_j \not= 0 \} $. If $S_0$ were known, the least squares estimator
$$ \hat f_{\rm LSE} := \arg \min_{f \in  \N_{-S_0} } \| Y - f \|_n^2 $$
would satisfy: for all $v >0$, with probability at least $1- \exp[-v]$
$$\| \hat f_{\rm LSE} - f^0 \|_n \le \sqrt {r_{S_0} \over n} + \sqrt {2v \over n} .$$
This follows from a concentration bound for chi-squared random variables, see  
Lemma 1 in \cite{laur00}.
An aim is to show that the estimator $\hat f$ converges with the same rate $\sqrt {r_{S_0} / n }$,
modulo log-factors. In fact we aim at showing this type of result with $f^0$ potentially replaced by a sparse approximation.
We hope to be able to choose the active set $S$ of a sparse approximation such that $r_S$ is small. On the other hand,
as we will see, the distance of the ``non-active" variables
to the linear space ${\cal N}_{-S}$ will play an important role: the smaller this distance, the
less noise is left to be overruled by the penalty. Therefore, we allow for the possibility to
extend $\N_{-S}$ to a larger linear space $\bar {\cal N}_{-S}\supseteq \N_{-S}$. 
This can be done for instance by adding some ``mock" active variables to the active set.
We let $\bar r_S= {\rm dim} (\bar \N_{-S} )$.
Thus, $\bar r_{S} \ge r_S$ but in our application to trend filtering we will choose them of the same order.

\subsection{The dictionary}\label{general-dictionary.section}
Given the linear space $\bar \N_{-S}\supseteq \N_{-S}$ we can decompose a vector $f \in \R^n$ into its projection
$f_{{\bar \N}_{-S}}$ onto $\bar \N_{-S}$ and its projection onto the ortho-complement 
$\bar \N_{-S}^{\perp}$, which we call its anti-projection:
$$ f = f_{{\bar \N}_{-S}} + f_{{\bar \N}_{-S}^{\perp}}. $$
The anti-projection  is the part we want to overrule by the penalty. For this purpose, we define a dictionary
$\Psi^{-S} := \{ \psi_j^{-S} \}_{j \in \D \backslash S} \in \R^{n  \times (m-s) }$ such that for all $f\in \R^n $ and for $b_{-S} = (D f)_{-S} $
$$ f_{{\bar \N}_{-S}^{\perp}}= \Psi^{-S} b_{-S} . $$
In general there can be several choices for $\Psi^{-S}$. In the application to trend filtering that we consider in this paper, $\Psi^{-S}$
will be uniquely defined (when $\bar \N_{S}=\N_{-S}$ it holds that
$ \Psi^{-S}= D_{-S}^{\prime} (D_{-S} D_{-S}^{\prime}  )^{-1} $ with $D_{-S}$  being the matrix
$D$ with the rows indexed by $S$ removed). 

\subsection{Effective sparsity}\label{general-effective-sparsity.section}

The effective sparsity will be invoked to establish adaptive bounds. 

Fix some $u>0$. Its value will occur in the confidence level of the inequalities in Theorem \ref{dualthm45}.
We define
\begin{equation}\label{lambda0.equation} 
 \lambda_0 (u) := \sqrt{2 \log (2(m-s))+ 2u)  \over n} .
 \end{equation}
In what follows, we assume that the tuning parameter $\lambda$ satisfies
\begin{eqnarray}\label{lambda.equation} 
 \lambda \ge  \max_{j \in \D \backslash S } \| \psi_j^{-S}  \|_n \lambda_0 (u) . 
 \end{eqnarray}
 
 For a vector $w_{-S}$ with $0 \le w_j \le 1 $ for all $j \in \D \backslash S $ we write
 $$ (1- w_{-S}) ( Df)_{-S} := \{ (1- w_j ) ( Df)_j \}_{j \in \D \backslash S } . $$

\begin{definition}[Effective sparsity]\label{dualdef41}
%
Let $q_S\in \{-1,+1\}^{s}$ be a sign vector.  The noiseless effective sparsity is 
$$\Gamma^2(q_S ):= \left ( \max   \biggl \{ q_S^{\prime} (Df)_S - \| (D f)_{-S} \|_1 : \ \| f \|_n=1   \biggr \}\right )^{2}  . $$
The noisy effective sparsity is defined as 
$$\Gamma^2(q_S, w_{-S} ):=\left( \max  \biggl \{ \ q_S^{\prime} (Df)_S - \| (1- w_{-S} (D f)_{-S} \|_1 : \ \| f \|_n =1   \biggr \}\right )^{2}  ,$$ 
where
$$ w_j = \| \psi_j^{-S} \|_n\lambda_0 (u)  /\lambda, \   j \in \D \backslash S ,$$
with $\lambda$ satisfying (\ref{lambda.equation}). 
\end{definition}

\begin{remark} On the slightly negative side, when applying Theorem \ref{dualthm45} one may need to choose $\lambda$ strictly larger than (but of the same
order as)  
required in (\ref{lambda.equation}) in order to have a ``well-behaved"  effective sparsity. On the positive side, depending on
the situation, one may improve upon $\lambda_0 (u)$ in (\ref{lambda0.equation}) using bounds
for weighted empirical processes.
\end{remark}

\subsection{Main result for the general analysis problem}\label{general-main.section}
Recall that the set $S$ is arbitrary. In the following theorem, the set $S$ and also its vector
$f\in \R^n$ can be chosen to optimize the bounds by trading off approximation error
and estimation error. The theorem  provides adaptive bounds (oracle inequalities)
since the trade-off depends on the unknown signal $f^0$. 

\begin{theorem}\label{dualthm45}
Let $u >0$, $v>0$ and let the tuning parameter 
$\lambda $ satisfy (\ref{lambda.equation}).  Then $\forall f \in \R^n$ the following bounds hold: 
\begin{itemize}
\item
a non-adaptive bound: with probability at least $1-e^{-u}-e^{-v}$,
\begin{eqnarray*}
\norm{\hf-f^0}^2_n &\le& \norm{f-f^0}^2_n + 4 \lambda \norm{D f }_1\\
&+&  \left( \sqrt{\frac{\bar r_S}{n}} + \sqrt{ \frac{2v}{n}} \right)^2.
\end{eqnarray*}
\item
and an adaptive bound: with probability at least $1-e^{-u}-e^{-v}$,
\begin{eqnarray*}
\norm{\hf-f^0}^2_n &\le& \norm{f-f^0}^2_n + 4 \lambda \norm{(D f)_{-S} }_1\\
&+&  \left( \sqrt{\frac{\bar r_S}{n}} + \sqrt{ \frac{2v}{n}}+ \lambda\Gamma(q_S,w_{-S}) \right)^2,
\end{eqnarray*}
where $q_S= \text{sign}(Df)_S $.
\end{itemize} 
\end{theorem}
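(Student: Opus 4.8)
The plan is to reduce this to the now-standard chain of inequalities for the $\ell_1$-penalized least squares estimator, using the KKT/basic inequality together with a projection argument and the dual-norm bound on the noise. First I would write out the \emph{basic inequality}: by definition of $\hat f$ as a minimizer of $\|Y-f\|_n^2 + 2\lambda\|Df\|_1$, comparing with the competitor $f$ gives
\begin{equation*}
\|\hat f - f^0\|_n^2 + 2\lambda\|D\hat f\|_1 \le \|f-f^0\|_n^2 + 2\lambda\|Df\|_1 + \tfrac{2}{n}\langle \epsilon, \hat f - f\rangle,
\end{equation*}
where $\epsilon = Y - f^0$. The core task is to control the empirical process term $\tfrac{2}{n}\langle\epsilon,\hat f-f\rangle$. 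Here I would split $\hat f - f$ via the decomposition into the linear space $\bar{\mathcal N}_{-S}$ and its orthocomplement: $\hat f - f = (\hat f - f)_{\bar{\mathcal N}_{-S}} + (\hat f - f)_{\bar{\mathcal N}_{-S}^\perp}$. On the projection part, $\langle\epsilon,(\hat f-f)_{\bar{\mathcal N}_{-S}}\rangle$ is the inner product of the Gaussian noise with a vector in a fixed $\bar r_S$-dimensional subspace, so it is bounded by $\|\epsilon_{\bar{\mathcal N}_{-S}}\|$ times the norm of that vector; a chi-square concentration bound (Lemma 1 of \cite{laur00}) gives $\|\epsilon_{\bar{\mathcal N}_{-S}}\|_n \le \sqrt{\bar r_S/n} + \sqrt{2v/n}$ with probability at least $1-e^{-v}$.

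For the anti-projection part, the point of the dictionary is that $(\hat f - f)_{\bar{\mathcal N}_{-S}^\perp} = \Psi^{-S}(D\hat f - Df)_{-S}$, so
\begin{equation*}
\tfrac{2}{n}\langle\epsilon,(\hat f-f)_{\bar{\mathcal N}_{-S}^\perp}\rangle = \tfrac{2}{n}\sum_{j\in\D\backslash S}\langle\epsilon,\psi_j^{-S}\rangle\,(D\hat f - Df)_j \le 2\max_{j}\tfrac{|\langle\epsilon,\psi_j^{-S}\rangle|}{n}\,\|(D\hat f - Df)_{-S}\|_1.
\end{equation*}
Each $\langle\epsilon,\psi_j^{-S}\rangle/n$ is centered Gaussian with standard deviation $\|\psi_j^{-S}\|_n/\sqrt n$; a union bound over the $m-s$ indices together with the choice of $\lambda_0(u)$ in (\ref{lambda0.equation}) shows that, with probability at least $1-e^{-u}$, $\max_j |\langle\epsilon,\psi_j^{-S}\rangle|/n \le \max_j\|\psi_j^{-S}\|_n\lambda_0(u) \le \lambda$, by the tuning-parameter condition (\ref{lambda.equation}). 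More sharply, one keeps the weights: $|\langle\epsilon,\psi_j^{-S}\rangle|/n \le w_j\lambda$ with $w_j = \|\psi_j^{-S}\|_n\lambda_0(u)/\lambda$. Inserting this into the basic inequality, writing $\|D\hat f\|_1 \ge \|(D\hat f)_S\|_1 + \|(D\hat f)_{-S}\|_1$ and $\|Df\|_1 = \|(Df)_S\|_1 + \|(Df)_{-S}\|_1$, and using the triangle inequality on the $S$-block with the sign vector $q_S=\mathrm{sign}(Df)_S$, the penalty terms regroup so that the $\|(D\hat f - Df)_{-S}\|_1$ contribution is absorbed up to the weighted residual $\|(1-w_{-S})(D\hat f - Df)_{-S}\|_1$, leaving a term of the form $2\lambda\big(q_S'(D(\hat f - f))_S - \|(1-w_{-S})(D(\hat f-f))_{-S}\|_1\big)$.

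By the definition of the noisy effective sparsity (Definition \ref{dualdef41}), this last expression is at most $2\lambda\,\Gamma(q_S,w_{-S})\,\|\hat f - f\|_n \le 2\lambda\,\Gamma(q_S,w_{-S})\,(\|\hat f-f^0\|_n + \|f-f^0\|_n)$ — homogenizing by the norm of $\hat f - f$ is exactly what the ``$\|f\|_n=1$'' normalization in the definition is built for. Collecting everything, one obtains an inequality of the shape $a^2 \le b^2 + (\text{linear in }a)$ with $a = \|\hat f - f^0\|_n$; solving this quadratic (completing the square, so that $a \le b + \sqrt{\bar r_S/n}+\sqrt{2v/n}+\lambda\Gamma$ absorbs the cross terms) yields the stated adaptive bound. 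The non-adaptive bound is the same argument with $S=\emptyset$ (equivalently, bounding $\|(Df)_{-S}\|_1$ crudely and dropping the effective-sparsity refinement), or simply noting that one may always take $w_{-S}\equiv 0$ and $\Gamma$-term replaced by control of all of $\|Df\|_1$. The main obstacle — and the place where care is genuinely needed — is the bookkeeping in the penalty-regrouping step: one must correctly track which parts of $\|D\hat f\|_1 - \|Df\|_1$ go into the ``free'' negative term, which cancel against the noise bound via the weights $w_j$, and which become the $4\lambda\|(Df)_{-S}\|_1$ approximation term, all while keeping the constant in front of the effective-sparsity contribution compatible with the final completion-of-the-square step; this is where the modification over \cite{dala17} (the ``mock'' variables enlarging $\N_{-S}$ to $\bar{\N}_{-S}$, and the weighted rather than crude noise bound) has to be threaded through consistently.
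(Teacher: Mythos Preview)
Your decomposition of the empirical process, the concentration bounds, the use of the dictionary for the anti-projection, and the penalty-regrouping leading to the effective sparsity term are all exactly what the paper does. The gap is in the very first step and the very last step, and they are linked.

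The basic inequality you write,
\[
\|\hat f - f^0\|_n^2 + 2\lambda\|D\hat f\|_1 \le \|f-f^0\|_n^2 + 2\lambda\|Df\|_1 + \tfrac{2}{n}\langle \epsilon, \hat f - f\rangle,
\]
is the \emph{weak} form. The paper (Lemma~\ref{duallem31}) instead uses the strong form
\[
\|\hat f - f^0\|_n^2 + \|\hat f - f\|_n^2 \le \|f-f^0\|_n^2 + \tfrac{2}{n}\epsilon'(\hat f - f) + 2\lambda(\|Df\|_1 - \|D\hat f\|_1),
\]
which carries the additional $\|\hat f - f\|_n^2$ on the left. This extra term comes from the KKT conditions together with the three-point identity $2(f^0-\hat f)'(\hat f - f) = \|f^0 - f\|_2^2 - \|\hat f - f^0\|_2^2 - \|\hat f - f\|_2^2$, and it is precisely what makes the final step clean: after bounding the right-hand side by $2C\|\hat f - f\|_n + 4\lambda\|(Df)_{-S}\|_1$ with $C = \sqrt{\bar r_S/n} + \sqrt{2v/n} + \lambda\Gamma(q_S,w_{-S})$, one applies $2C\|\hat f - f\|_n \le C^2 + \|\hat f - f\|_n^2$ and the $\|\hat f - f\|_n^2$ cancels exactly, giving the stated bound with leading constant~$1$.

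Your route --- bounding $\|\hat f - f\|_n \le \|\hat f - f^0\|_n + \|f - f^0\|_n$ and then ``solving the quadratic'' in $a = \|\hat f - f^0\|_n$ --- does not recover this. From $a^2 \le b^2 + 2C(a+b)$ one gets $(a-C)^2 \le (b+C)^2$, hence $a \le b + 2C$, which after squaring produces a cross term $4bC$ and the wrong constant $4C^2$; the resulting inequality is \emph{not} of the form $\|\hat f - f^0\|_n^2 \le \|f - f^0\|_n^2 + C^2 + 4\lambda\|(Df)_{-S}\|_1$. In other words, without the extra $\|\hat f - f\|_n^2$ in the basic inequality you lose the sharp oracle inequality that the theorem claims. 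Fixing this is a one-line change to your starting inequality, after which your completion-of-the-square step becomes the paper's ``conjugate inequality $2ab \le a^2 + b^2$'' applied with $b = \|\hat f - f\|_n$, not $\|\hat f - f^0\|_n$.
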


\begin{proof}[Proof of Theorem \ref{dualthm45}]
See Appendix \ref{Thm2.2.proof}.
\end{proof}

\begin{remark} \label{compatibility.remark} 
Theorem \ref{dualthm45} is a modification of
the findings  by \cite{dala17} who study the Lasso problem.
Theorem \ref{dualthm45} is in terms of the analysis problem, as in \cite{orte19-2}.
We furthermore allow for
augmentation of $\N_{-S}$. Moreover, \cite{dala17} and \cite{orte19-2} replace $\Gamma (q_S, w_{-S})$
by the larger quantity $ \max \{  \ \| (Df )_S \|_1 - \| (1- w_{-S} ) ( Df)_{-S} \|_1 : \| f \|_n =1  \}  = :
\sqrt {r_S} / \kappa (S, w_{-S})  $ where $\kappa^2 (S , w_{-S})$ is called the ``compatibility constant".
The paper \cite{orte19-2} derives oracle results for the square-root analysis problem, which is
the analysis version of the square-root Lasso introduced by \cite{bell11}. Joining these findings
allows to derive a square-root version of Theorem \ref{dualthm45} which can be applied
to the case of unknown noise variance (see also Remark \ref{sigma.remark}). 
\end{remark}

\subsection{Interpolating vectors} \label{interpolating.section}
To upper-bound the effective sparsity one may invoke interpolating vectors. 

\begin{definition}[Interpolating vector]\label{interpolating.definition} 
Let $q_S\in \{-1,+1\}^{s}$ be a sign vector.  We call the completed vector $q \in \R^m$ with index set $\D$ an interpolating vector
(that interpolates the given signs at $S$). 
\end{definition}

\begin{lemma}\label{interpolating.lemma}
Let $q_S\in \{-1,+1\}^{s}$ be a sign vector.
The noiseless effective sparsity satisfies
$$\Gamma^2(q_S )\le \inf \{ n \| D^{\prime} q \|_2^2 : \ q \ \mbox{interpolating}, \ 
| q_j | \le 1 \ \forall \ j \in \D \backslash S \} .$$
The noisy effective sparsity $\Gamma(q_S, w_{-S})$ satisfies
$$\Gamma^2(q_S, w_{-S} )\le \inf \{ n \| D^{\prime} q \|_2^2 :  \ q \ \mbox{interpolating}, \ 
| q_j | \le 1- w_j \ \forall \ j \in \D \backslash S \} ,$$
where
$$ w_j = \| \psi_j^{-S} \|_n\lambda_0 (u)  /\lambda, \   j \in \D \backslash S , $$
with $\lambda$ satisfying (\ref{lambda.equation}). 
\end{lemma}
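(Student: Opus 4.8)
The plan is to prove the two inequalities in Lemma \ref{interpolating.lemma} by exhibiting, for each interpolating vector $q$ satisfying the stated size constraints, that the quantity $n\|D'q\|_2^2$ dominates the corresponding effective sparsity. The starting point is the variational definition of $\Gamma^2(q_S)$ (respectively $\Gamma^2(q_S,w_{-S})$) as the square of a maximum over $\{f: \|f\|_n = 1\}$ of a linear functional of $Df$. The idea, following the dual-certificate philosophy of \cite{cand14}, is that a well-chosen $q$ turns the separately-appearing terms $q_S'(Df)_S$ and $\|(Df)_{-S}\|_1$ (or its weighted variant) into a single inner product $\langle q, Df\rangle = \langle D'q, f\rangle$, which is then controlled by Cauchy--Schwarz and $\|f\|_n = 1$.

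Concretely, fix any interpolating $q$ with $|q_j|\le 1$ for $j \in \D\setminus S$ (for the noiseless case). For any $f$ with $\|f\|_n = 1$,
\begin{align*}
q_S'(Df)_S - \|(Df)_{-S}\|_1 &\le q_S'(Df)_S + \sum_{j \in \D\setminus S} q_j (Df)_j = q'(Df) = \langle D'q, f\rangle \\
&\le \|D'q\|_2 \, \|f\|_2 = \sqrt{n}\, \|D'q\|_2,
\end{align*}
where the first inequality uses $|q_j| \le 1$ so that $q_j(Df)_j \ge -|(Df)_j|$ pooled over $j \notin S$ gives $-\|(Df)_{-S}\|_1 \le \sum_{j\notin S} q_j (Df)_j$, and the last equality is $\|f\|_2 = \sqrt n \|f\|_n = \sqrt n$. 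Taking the maximum over admissible $f$ and then squaring yields $\Gamma^2(q_S) \le n\|D'q\|_2^2$; taking the infimum over all admissible interpolating $q$ gives the first claim. For the noisy case one repeats the argument with the constraint $|q_j| \le 1 - w_j$, so that $q_j(Df)_j \ge -(1-w_j)|(Df)_j|$ and hence $\sum_{j\notin S} q_j(Df)_j \ge -\|(1-w_{-S})(Df)_{-S}\|_1$, which is exactly the negative of the penalty term appearing in $\Gamma^2(q_S,w_{-S})$; the rest is identical.

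This argument is essentially a one-line duality estimate, so I do not anticipate a genuine obstacle — the only points requiring a little care are (i) making sure the sign of the bound on $q_j(Df)_j$ goes the right way when $(Df)_j$ has arbitrary sign, which is why we need $|q_j|$ (not $q_j$ itself) bounded by $1$ (resp. $1-w_j$), and (ii) noting that the infimum is over a nonempty set, since $q_S$ is prescribed and one may always complete it by, e.g., $q_j = 0$ for $j \notin S$ (which trivially satisfies both $|q_j|\le 1$ and $|q_j| \le 1 - w_j$, the latter because $w_j \le 1$ by the assumption $0 \le w_j \le 1$ following from (\ref{lambda.equation})). The substantive content of the lemma is not in its proof but in the fact that it reduces bounding the effective sparsity to the combinatorial/analytic task of constructing a good interpolating $q$ with small $\|D'q\|_2$, which is carried out later for $D = \Delta(k)$, $k \in \{1,2,3,4\}$.
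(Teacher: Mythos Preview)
Your proof is correct and follows essentially the same approach as the paper: bound $-\|(1-w_{-S})(Df)_{-S}\|_1$ from below by $q_{-S}'(Df)_{-S}$ using $|q_j|\le 1-w_j$, combine with $q_S'(Df)_S$ to get $q'Df=(D'q)'f$, and apply Cauchy--Schwarz with $\|f\|_2=\sqrt{n}\,\|f\|_n$. Your added remark that the feasible set is nonempty (via $q_j=0$ for $j\notin S$) is a harmless bonus not present in the paper's proof.
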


\begin{proof}[Proof of Lemma \ref{interpolating.lemma}]
We only prove the statement of the lemma for the noisy case as the argument is the same for the noiseless case.
For any vector $q_{-S}$ with $|q_j | \le 1- w_j$ for all $j \in \D \backslash S$, it is true that for all $f$,
$$ \| (1- w_{-S} ) (Df)_{-S} \|_1 \ge q_{-S} (Df)_{-S} .$$ Therefore, for all $f$, 
\begin{eqnarray*}   q_S^{\prime} (Df)_S -  \| (1-w_{-S} )( D f)_{-S} \|_1
 \le  q^{\prime} Df \le  \sqrt{n} \norm{D'q}_2 \norm{f}_n.
\end{eqnarray*}
\end{proof}

\begin{remark}\label{candes.remark}
To bound the effective sparsity invoking interpolating vectors, as the above lemma does,
we were inspired by the
dual certificates as applied in \cite{cand14}. These have also been used in other works. The paper 
\cite{cand14} considers the
superresolution problem and develops an interpolating function to establish exact
recovery in the noiseless problem. The requirement on this interpolating function is that
it is in the range of the transpose of the design matrix.
Dual certificates can be found in earlier work as well, see for example \cite{candes2010probabilistic}.
The latter applies  a ``near" dual certificate to deal
with noisy measurements. However, their ``nearness" appears to be restricted to a special setting.
In \cite{tang2014near} the approach is related to ours
but very much tied down to the noisy superresolution problem.
We are not aware of any work where
an explicit connection is made between dual certificates and compatibility constants, the latter
being related to the reciprocal of effective sparsity: 
see Remark \ref{compatibility.remark}.
The relation between dual certificates and interpolating vectors on the one hand and compatibility 
on the other hand, appears to have been hidden in the literature. Moreover, the notion of compatibility
has developed itself over the years. In for example \cite{boyer2017adapting} 
it is shown that compatibility conditions do not hold for superresolution,
but they show it for an older version of compatibility, not for the newer version based on $\kappa^2 (S, w_{-S})$.
\end{remark}

\section{Application of Theorem \ref{dualthm45} when $D = \Delta(k)$}\label{application.section}
In order to apply Theorem \ref{dualthm45} with
$D = \Delta(k)$ we need to establish a bound for the length of the columns of an appropriate dictionary $\Psi^{-S}$.
This is done in Subsection \ref{dictionary.section}.
Then we can apply the first part of Theorem \ref{dualthm45}
and this will, as we will see in Subsection \ref{minimax.section}, result in the
minimax rate up to log-terms.
Next, for the application of the second part of Theorem \ref{dualthm45} to obtain
adaptive results, we upper-bound the effective sparsity using an appropriate interpolating vector.
This is done in Subsection \ref{effective-sparsity.section}. We then have all the material
to establish Theorem \ref{dualthm101} as is summarized in Subsection \ref{proofmain.section}.

\subsection{The dictionary when $D = \Delta (k)$} \label{dictionary.section}
We start with some remarks, whose purpose is mainly to introduce some further notation.
Note that by definition, ${\cal N}_{\D} := \{ f \in \R^n :\ (Df )_j =0 \ \forall \ j \in \D \} $. For vectors $f_{{\cal N}_{\D}}^{\perp} $ a dictionary  is denoted by $\Psi^{\D}$.
 If $D$ has full row rank (which will be the
case for $D=\Delta (k)$, see \cite{wang2014falling}) it holds that
 $\Psi^{\cal D} = D^{\prime} (D D^{\prime} )^{-1} $. This is the Moore Penrose pseudo inverse $D^+$ of $D$.
 Let now $\Psi = \{ \psi_j \}_{j=1}^n \in \R^{n \times n}$ be a ``complete" dictionary, which means that we can write each $f \in \R^n$ as
 $f = \Psi b $, where $b_{\cal D} = Df$. Then obviously $\Psi^{\D} = \{ ( \psi_j )_{{\cal N}_{\D}^{\perp}} \}_{j \in {\cal D} } $ is formed by the projections
 of the dictionary vectors $\psi_j$ with index in ${\cal D}$ on the ortho-complement of the space spanned by $\{ \psi_{j^{\prime}} \}_{j^{\prime} \notin \D } $.  
 Moreover the dictionary $\Psi^{-S}= \{ \psi_j^{-S} \}_{j \in \D \backslash S} $ for vectors $f_{{\cal N}_{-S}}^{\perp} $  has $\psi_j^{-S} = ( \psi_j )_{{\cal N}_{-S}^{\perp}}=( \psi_j^{\D}  )_{{\cal N}_{-S}^{\perp}}$.

 As said, we may want to augment the space ${\N}_{-S}$ to a larger linear space $\bar \N_{-S}$ so that the anti-projections
 will have smaller length. This is done by taking the direct product of ${\cal N}_{-S}$ with a space spanned by
 additional linearly independent vectors $\{ \phi_j \} $ with $\phi_j \notin {\cal N}_{-S}$ for all $j$. 
 We call these additional vectors ``mock" variables. One may pick them in the set $\{\psi_j \}_{j \in \D \backslash S }$ (or $ \{ \psi_j^{\D} \}_{j \in \D \backslash S } $) in which case we call them
 ``mock" active variables.  
 
 We now first present  upper bounds for $\{ \| \psi_j \|_2^2 \}_{j \in {\cal D} \backslash S }$
 for the case $D= \Delta(k)$ for general $k$. We then give exact expressions  for $k\in \{1,2,3\}$ to illustrate that
 the bounds are sharp.
 
 \subsubsection{The dictionary for general $k$: upper bounds}\label{dictionaryk.section}
We take $\bar \N_{-S}$ as the direct product of $\N_{-S}$ and
the space spanned by $\{ \psi_{t_i+1} , \ldots \psi_{t_i+k-1 } \}_{i=1}^s$ (assuming
$t_s+k-1 \le n $). In this way we disconnect the system into
 $s+1$ components each having the same structure. The matrix $D$ with the rows
 indexed by $S \cup \{ t_i+1 , \ldots ,  t_i+k-1  \}_{i=1}^s$ removed is a block matrix. 
  To avoid digressions more details are given below
 only for $k \in \{1,2,3\}$ but it is clear that such a disconnection works for general $k$.
The space $\bar \N_{-S}$ has dimension $\bar r_S = r_S + s(k-1) = k(s+1)$. One may apply
a reformulation for the sub-intervals $\{[t_{i-1}+1: t_i-1 ]\}_{i=1}^{s+1} $, to arrive at 
upper bounds for the lengths of the columns of $\Psi^{-S}$ from 
upper bounds for the lengths of the columns of $\Delta(k)^+$.

\begin{lemma}[An upper bound for the length of the columns of $\Delta(k)^+$.]\label{duallem39}
We have that for $j \in [k+1 : n-1]$
$$ \norm{\psi_j^{\cal D}}_2^2 \le  \min \biggl ( (j-k)^{2k-1}, (n+1-j)^{2k-1}\biggr )  . 
$$
\end{lemma}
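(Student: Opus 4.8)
The plan is to reduce the statement to constructing a single good feasible point for a linear system. Since $\Delta(k)$ has full row rank (see \cite{wang2014falling}), the dictionary equals the Moore--Penrose pseudoinverse, $\Psi^{\cal D}=\Delta(k)^+$, so $\psi_j^{\cal D}=\Delta(k)^+ e_j$ where $e_j\in\R^{n-k}$ is the unit vector in the coordinate indexed by $j\in\D$. By the defining property of the pseudoinverse, $\psi_j^{\cal D}$ is the $\ell_2$-smallest solution of $\Delta(k)x=e_j$. Hence it suffices to exhibit, for each fixed $j\in[k+1:n-1]$, one vector $x^{(j)}$ with $\Delta(k)x^{(j)}=e_j$ and $\|x^{(j)}\|_2^2\le (n+1-j)^{2k-1}$; the complementary bound $(j-k)^{2k-1}$ then follows by a reversal symmetry, and taking the smaller of the two constructions gives the minimum in the statement.

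The construction relies on the fact that $\ker\Delta(k)$ consists of the discrete polynomials of degree $\le k-1$: $(\Delta(k)g)_i=0$ for every $i$ in a consecutive block of indices exactly when $g$, restricted to that block, is the sampling of a polynomial of degree $<k$. I would therefore take
$$ x^{(j)}_i=\begin{cases} 0, & i<j,\\ \binom{i-j+k-1}{k-1}, & i\ge j. \end{cases} $$
The map $i\mapsto\binom{i-j+k-1}{k-1}$ is a polynomial of degree $k-1$ in $i$ vanishing at the $k-1$ points $i=j-1,\dots,j-k+1$ and equal to $1$ at $i=j$. From this one verifies directly that $(\Delta(k)x^{(j)})_i=0$ for $i\in[k+1:j-1]$ (only zero entries of $x^{(j)}$ enter), that $(\Delta(k)x^{(j)})_i=0$ for $i\in[j+1:n]$ (on the block $[j-k+1:n]$ the vector $x^{(j)}$ equals the degree-$(k-1)$ polynomial, since both vanish on $[j-k+1:j-1]$), and that $(\Delta(k)x^{(j)})_j=x^{(j)}_j=1$ (the other $k$ terms involve only the zero entries $x^{(j)}_{j-1},\dots,x^{(j)}_{j-k}$). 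Hence $\Delta(k)x^{(j)}=e_j$.

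It then remains to estimate $\|x^{(j)}\|_2^2=\sum_{m=0}^{M-1}\binom{m+k-1}{k-1}^2$, where $M:=n+1-j$. I would bound one factor of each summand by its maximal value $\binom{M+k-2}{k-1}$ and evaluate the remaining sum by the hockey-stick identity $\sum_{m=0}^{M-1}\binom{m+k-1}{k-1}=\binom{M+k-1}{k}$, so that $\|x^{(j)}\|_2^2\le\binom{M+k-2}{k-1}\binom{M+k-1}{k}$. Applying the elementary inequality $M+i\le(i+1)M$ (valid for $M\ge 1$) factor by factor gives $\binom{M+k-2}{k-1}\le M^{k-1}$ and $\binom{M+k-1}{k}\le M^{k}$, whence $\|\psi_j^{\cal D}\|_2^2\le\|x^{(j)}\|_2^2\le M^{2k-1}=(n+1-j)^{2k-1}$. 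For the remaining bound $(j-k)^{2k-1}$ I would invoke the order-reversal symmetry of the $k$th difference operator: writing $R$ for the order-reversing permutation of $\{1,\dots,n\}$, a re-indexing of the sum defining $\Delta(k)$ gives $\Delta(k)R=(-1)^k\Pi\,\Delta(k)$, with $\Pi$ the permutation matrix on $\D$ of the involution $i\mapsto n+1+k-i$. Since $R$ and $\Pi$ are orthogonal this yields $\Delta(k)^+=(-1)^k R\,\Delta(k)^+\Pi$, hence $\|\psi_j^{\cal D}\|_2=\|\psi_{n+1+k-j}^{\cal D}\|_2$, and feeding the index $n+1+k-j$ into the bound already established produces $\|\psi_j^{\cal D}\|_2^2\le(j-k)^{2k-1}$.

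I expect the main point requiring care to be the verification that $x^{(j)}$ is genuinely feasible, i.e.\ that $\Delta(k)x^{(j)}=e_j$ holds for all $k$ and all $j$ in the stated range, including the bookkeeping at the two ends of the index interval (this is exactly where the restriction $j\le n-1$ enters); the underlying reason --- that $x^{(j)}$ is a ``truncated'' discrete polynomial of degree $k-1$ --- makes the check routine once phrased in those terms. The combinatorial estimate and the reversal argument are then straightforward.
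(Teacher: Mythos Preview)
Your proof is correct and essentially mirrors the paper's argument: your feasible vector $x^{(j)}$ is precisely the falling-factorial basis vector $\phi_j^k$ that the paper constructs, and both proofs bound $\|\psi_j^{\D}\|_2$ by $\|\phi_j^k\|_2$ (you via the minimum-norm property of the pseudoinverse, the paper via the anti-projection description $\psi_j^{\D}=(\phi_j^k)_{\N_{\D}^\perp}$) and then invoke the same order-reversal symmetry. The only cosmetic difference is in the norm estimate: you control $\sum_m\binom{m+k-1}{k-1}^2$ with the hockey-stick identity together with the factorwise bound $M+i\le(i+1)M$, whereas the paper passes directly to the integral comparison $\|\phi_j^k\|_2^2\le\int_0^{n+1-j}x^{2k-2}\,dx$.
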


\begin{proof}[Proof of Lemma \ref{duallem39}]
See Appendix \ref{dictionaryproofs.section}.
\end{proof}

It follows that 
\begin{equation}\label{lengthpsi.equation}
 \| \psi_{t_{i-1} +j }^{-S}  \|_2^2 \le \min \biggl ( j^{2k-1}, (n_i - j )^{2k-1} \biggr ) , \ j \in [1 : n_i-1] , \ i=[1 : s+1 ] . 
\end{equation}
Recall our abuse of notation $\| \cdot \|_n^2 = \| \cdot \|_2^2 / n$. We get
$$\max_{j \in [1:n_i-1] } \| \psi_{t_{i-1} +j}^{-S}  \|_n^2 \le { (n_i/2)^{2k-1} \over n}  ,  \ i=[1 : s+1 ].$$

We conclude that the requirement (\ref{lambda.equation}) on the tuning parameter
is met when
\begin{eqnarray}\label{lambda1.equation}
\lambda \ge  n^{k-1} \biggl ( {  n_{\rm max} \over  2 n  }  \biggr )^{2k-1 \over 2} \sqrt { 2 \log (2(n- s-k)) + 2u   \over n} .
\end{eqnarray}

\subsubsection{The dictionary when $k=1$: exact expressions}\label{dictionary1.section} The case $k=1$ has been well studied, see \cite{gunt17} and its references.
We include it here and later in Subsection \ref{effective-sparsity1.section} to highlight the additional argument needed when $k>1$.
We choose $\bar \N_{-S}= \N_{-S}$: no augmentation of this null space.
In the present case $D$ is the incidence matrix of a path graph:
$$D= \Delta (1) := \begin{pmatrix} 1 & -1 & 0  &\cdots & 0 & 0  \cr 
0& 1 & -1 & \cdots & 0 & 0\cr 
0 & 0 & 1 & \cdots & 0& 0  \cr 
\vdots & \vdots & \vdots & \ddots & \vdots& \vdots   \cr
0 & 0 & 0 & \cdots &1 & -1 \cr  \end{pmatrix} .$$
Thus by removing the rows in $S$ we see that we turn the matrix into a block matrix consisting of $s+1$ row-wise orthogonal  submatrices
that have, apart from columns consisting of only zero's, the same structure as the original matrix $D$. 
It means that the dictionary $\Psi^{-S}$ is of the same form as the dictionary
$\Psi^{\cal D} $ for the vectors $f$ in deviation from their mean. It is easy to see that this
dictionary is
$$ \psi_{i,j}^{\D}= {j -1\over n} {\rm l} \{ j \le i \} - {n-j+1 \over n} {\rm l}  \{ j > i \} , i \in [1:n] , \ j \in [2: n] .$$
The matrix $\Psi^{\cal D} \in \R^{n \times (n-1) } $ has squared column lengths
$$ \| \psi_j^{\D } \|_2^2 = { ( j-1) (n-j+1) \over n} , \ j \in [2:n ] . $$
It follows that for $i=1 , \ldots , s+1 $ and $j  \in [ 1 : n_i -1] $,
$$ \| \psi_{t_{i-1} +j}^{-S}  \|_2^2   = { j (n_i -j ) \over  n_i  }  . $$

\subsubsection{The dictionary when $k=2$: exact expressions}\label{dictionary2.section}
When the analysis operator $D$ is equal to $\Delta(2)$ its expression is
$$D = \Delta(2) = \begin{pmatrix} 1 & -2 & 1 & 0 & \cdots & 0 & 0 & 0\cr 
0 & 1 & -2 & 1 & \cdots & 0 & 0 & 0 \cr
0 &0 & 1 & -2 & \cdots & 0 & 0 & 0  \cr 
\vdots & \vdots & \vdots & \vdots & \ddots & \vdots & \vdots  \cr 
0 & 0 & 0 & 0 & \cdots & 1& -2 & 1  \cr \end{pmatrix}. $$
Therefore by removing only the rows in $S$ the matrix we end up with is no longer
consisting of row-wise orthogonal sub-matrices. However, if we remove
two rows at the time, those in $S $ together with their immediate followers $ \{ t_i+1 \}_{i=1}^s $ (for $t_s+ 1 \le n $ as otherwise at that point there is nothing to remove), we again
obtain $s+1$ sub-matrices that are, apart from their columns consisting of
only zero's, of the same form as $D$. The dictionary $\Psi^{-S}$ for functions $f_{\N_{-S}^{\perp} } $ has thus the same form as the  dictionary $\Psi^{\D} $ for the functions 
$f_{\N_{\D}^{\perp} } $, which are functions $f$ in deviation from
their projections on their linear part. 
Therefore it is useful to augment $\N_{-S}$ by taking
its direct product with the linear space spanned by the mock active variables $\{ \psi_{t_i+1 } \}_{i=1}^s $ to form $\bar \N_{-S}$. Note that $\bar r_S= r_S + s = 2(s+1)$.
Recall that $\Psi^{\D}$ is the Moore Penrose pseudo inverse $D^+$, in this case $\Delta(2)^+$.
%
We present the exact column lengths of $\Delta(2)^+ $ here:
they show that the upper bounds of Lemma \ref{duallem39} are, up to
constants, sharp.

\begin{lemma}[Length of the columns of $\Delta(2)^+$.]\label{duallem51}
The length of the columns $\{\psi^{\D}_j\}_{j \in [n]\setminus [2]}$ of the Moore Penrose inverse $\Psi^{\D}= \Delta(2)^+$ is 
(for $j \in [3: n] $)
$$ \norm{\psi^{\D}_j}^2_2= \frac{(n-j+1)(n-j+2)(j-2)(j-1)(2j(n-j+3)-3(n+1))}{6n(n+1)(n-1)}. $$
\end{lemma}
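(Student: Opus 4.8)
The plan is to compute $\norm{\psi^{\D}_j}^2_2$ by identifying $\psi^{\D}_j$ as a minimum-norm interpolant. Recall $\Delta(2)$ has full row rank, so $\Psi^{\D}=\Delta(2)^+ = \Delta(2)'\bigl(\Delta(2)\Delta(2)'\bigr)^{-1}$, and hence the $j$-th column $\psi^{\D}_j = \Delta(2)^+ e_j$ (with $e_j$ the coordinate vector selecting the row indexed by $j\in\D=[3:n]$) is the unique $g\in\R^n$ with $\Delta(2)g = e_j$ and $g\perp\ker\Delta(2)$; equivalently $\norm{\psi^{\D}_j}^2_2 = e_j'\bigl(\Delta(2)\Delta(2)'\bigr)^{-1}e_j$ is the $j$-th diagonal entry of the inverse of the pentadiagonal matrix $\Delta(2)\Delta(2)'$. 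Since $\ker\Delta(2)$ is the two-dimensional space of affine functions of the index, the task reduces to writing down the general solution of a second-order difference equation and projecting out its affine part.

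First I would solve $\Delta(2)g=e_j$, i.e. $g_i-2g_{i-1}+g_{i-2}=\ind\{i=j\}$ for $i\in[3:n]$. Setting $d_i:=g_i-g_{i-1}$ gives $d_i-d_{i-1}=\ind\{i=j\}$, hence $d_i=d_2+\ind\{i\ge j\}$, and one more summation yields $g=g_1\mathbf 1 + d_2 w + h^{(j)}$, where $\mathbf 1=(1,\dots,1)$, $w=(0,1,\dots,n-1)$, and $h^{(j)}\in\R^n$ is the truncated-linear vector $h^{(j)}_i:=(i-j+1)\ind\{i\ge j\}$, i.e. $h^{(j)}=(0,\dots,0,1,2,\dots,n-j+1)$ with first nonzero entry in position $j$. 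As $(g_1,d_2)$ range over $\R^2$ the term $g_1\mathbf 1+d_2 w$ sweeps out all of $\ker\Delta(2)$, so the orthogonality requirement forces $g=(I-\Pi)h^{(j)}$ with $\Pi$ the orthogonal projection onto $\ker\Delta(2)$; hence $\norm{\psi^{\D}_j}^2_2 = \norm{h^{(j)}}_2^2 - \norm{\Pi h^{(j)}}_2^2$.

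Next I would evaluate the right-hand side using the orthogonal basis $\{\mathbf 1,\tilde w\}$ of $\ker\Delta(2)$, where $\tilde w_i:=i-(n+1)/2$, for which $\norm{\mathbf 1}_2^2=n$ and $\norm{\tilde w}_2^2=n(n^2-1)/12$. With $L:=n-j+1$ all three ingredients are elementary power sums: $\norm{h^{(j)}}_2^2=\sum_{m=1}^L m^2=\tfrac{L(L+1)(2L+1)}{6}$, $\langle h^{(j)},\mathbf 1\rangle=\tfrac{L(L+1)}{2}$, and $\langle h^{(j)},\tilde w\rangle=\sum_{m=1}^L m\bigl(m+j-1-\tfrac{n+1}{2}\bigr)=\tfrac{L(L+1)}{6}\bigl(2L+1+3(j-1-\tfrac{n+1}{2})\bigr)=\tfrac{L(L+1)(n+2j-3)}{12}$. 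Plugging these into
\[
\norm{\psi^{\D}_j}^2_2 = \norm{h^{(j)}}_2^2 - \frac{\langle h^{(j)},\mathbf 1\rangle^2}{n} - \frac{\langle h^{(j)},\tilde w\rangle^2}{\norm{\tilde w}_2^2}
\]
and clearing the denominator $12\,n(n^2-1)$, one may factor $L(L+1)=(n-j+1)(n-j+2)$ out of the numerator, leaving $\norm{\psi^{\D}_j}^2_2 = \dfrac{(n-j+1)(n-j+2)\,B(n,j)}{12\,n(n^2-1)}$ with $B(n,j):=2n(n^2-1)(2L+1)-3(n^2-1)L(L+1)-L(L+1)(n+2j-3)^2$.

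The one genuine obstacle is then a purely algebraic identity: after substituting $L=n-j+1$ one must check that $B(n,j)=2(j-1)(j-2)\bigl(2j(n-j+3)-3(n+1)\bigr)$, which produces exactly the asserted formula. This is routine but bookkeeping-heavy; it is most conveniently done with the substitution $p:=n-j$, or simply by evaluating both (low-degree) polynomials at enough points. Sanity checks: the claimed expression is invariant under the reflection $j\mapsto n+3-j$ (the symmetry of $\Delta(2)$) and vanishes at the endpoints $j\in\{2,n+1\}$ of the extended index range; and the very same argument with $k=1$ — truncated step vector $h^{(j)}_i=\ind\{i\ge j\}$ and projection onto the constants only — reproduces $\norm{\psi^{\D}_j}^2_2=(j-1)(n-j+1)/n$ of Subsection~\ref{dictionary1.section}. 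Comparison with Lemma~\ref{duallem39} at $k=2$ then shows, as claimed, that its upper bound is sharp up to a constant.
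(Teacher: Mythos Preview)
Your proof is correct and essentially the same as the paper's: your truncated-linear vector $h^{(j)}$ is exactly the paper's dictionary vector $\psi_j$, your centered $\tilde w$ is their Gram--Schmidt vector $u_2$, and the computation $\|\psi_j^{\D}\|_2^2=\|h^{(j)}\|_2^2-\langle h^{(j)},\mathbf 1\rangle^2/n-\langle h^{(j)},\tilde w\rangle^2/\|\tilde w\|_2^2$ is identical in both. The only cosmetic difference is that you arrive at $h^{(j)}$ by explicitly solving the second-order difference equation $\Delta(2)g=e_j$, whereas the paper simply quotes the falling-factorial dictionary introduced earlier in the appendix.
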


\begin{proof}[Proof of Lemma \ref{duallem51}]
See Appendix \ref{dictionaryproofs.section}. 
\end{proof}

  \subsubsection{The dictionary when $k=3$: exact expressions}\label{dictionary3.section}
  When $k=3$ we form $\bar \N_{-S}$ by taking the direct product of the linear space $\N_{-S}$
  and the space spanned by $\{ \psi_{t_i+1}, \psi_{t_i+2} \}_{i=1}^s $ (assuming $t_s+2 \le n$). 
  This space has dimension
  $\bar r_S = r_S + 2s = 3(s+1)$. The dictionary $\Psi^{-S}$ has blocks of the same form as the Moore Penrose pseudo inverse
  $\psi^{\D}=\Delta (3)^+$.
We compute the exact length of the columns of $\Delta(3)^+$. 

\begin{lemma}[Length of the columns of $\Delta(3)^+$.]\label{duallem61}
The length of the columns $\{\psi_j^{-S}\}_{j \in [4:n]}$ of the Moore Penrose pseudo inverse $\Delta(3)^+$ is,
for $j \in [4:n]$, 
\begin{eqnarray*}
\norm{\psi_j^{\cal D}}^2_2&=& \frac{(j-3)(j-2)(j-1)(n+3-j)(n+2-j)(n+j-j)}{60 (n+2)(n+1)n (n-1)(n-2)}\times \\
&& \times \left(10 (n+1)(n+2)+3j(n+4-j)(j(n+4-j)-4n-5) \right).
\end{eqnarray*}
\end{lemma}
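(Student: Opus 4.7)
My plan is to compute the $j$-th column of $\Delta(3)^+$ explicitly and then sum the squares of its entries, following the same template used for $k=1$ and $k=2$. Since $\Delta(3)$ has full row rank (\cite{wang2014falling}), we have $\Delta(3)^+=\Delta(3)'(\Delta(3)\Delta(3)')^{-1}$, and the $j$-th column $\psi_j^{\cal D}$ is characterized uniquely by the two conditions
\begin{equation*}
(\Delta(3)\psi_j^{\cal D})_i=\delta_{ij} \text{ for all } i\in[4:n],\qquad \psi_j^{\cal D}\perp\ker(\Delta(3)),
\end{equation*}
where $\ker(\Delta(3))$ is the three-dimensional subspace of vectors $v_i=a+bi+ci^2$.

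First, I would construct a particular solution $G_j$ to the difference equation $(\Delta(3) v)_i=\delta_{ij}$. A natural Green's-function candidate, since $\Delta(3)$ is the third-order backward difference operator, is a truncated quadratic of the form $G_{j,i}=\frac{1}{2}(i-j+2)(i-j+1)\mathbf{1}\{i\geq j\}$ (up to a sign/shift to be fixed by direct verification against the definition of $\Delta(3)$). The fact that $\Delta(3)$ annihilates any quadratic polynomial makes this truncated quadratic a valid particular solution on $[4:n]$.

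Next, I would correct $G_j$ by a quadratic $P(i)=a+bi+ci^2$ to enforce orthogonality to $\ker(\Delta(3))$, namely
\begin{equation*}
\sum_{i=1}^n(G_{j,i}-P(i))=0,\quad \sum_{i=1}^n i\bigl(G_{j,i}-P(i)\bigr)=0,\quad \sum_{i=1}^n i^2\bigl(G_{j,i}-P(i)\bigr)=0.
\end{equation*}
This is a $3\times 3$ linear system whose coefficients on the right-hand side are the power sums $\sum_{i=j}^n i^{\ell}(i-j+1)(i-j+2)$ for $\ell=0,1,2$, and on the left are the standard power sums $\sum_{i=1}^n i^{\ell}$ for $\ell=0,\dots,4$. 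Solving this system yields closed-form polynomial expressions (in $n$ and $j$) for $a,b,c$, and hence an explicit formula for $\psi_j^{\cal D}=G_j-P$.

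Finally, I would compute $\|\psi_j^{\cal D}\|_2^2=\sum_{i=1}^n(\psi_j^{\cal D})_i^2$. Expanding this into a combination of power sums $\sum_i i^\ell$ and sums $\sum_{i\ge j}i^\ell(i-j+1)(i-j+2)$ and simplifying should produce the stated rational expression. The vanishing of the formula at the ``phantom'' boundary values $j\in\{1,2,3\}$ and $j\in\{n+1,n+2,n+3\}$ is a useful sanity check: these six roots account for the factors $(j-1)(j-2)(j-3)(n+1-j)(n+2-j)(n+3-j)$, which together with the known denominator of degree $5$ in $n$ means the residual quartic factor $10(n+1)(n+2)+3j(n+4-j)(j(n+4-j)-4n-5)$ can be pinned down from a few special values (e.g.\ $j$ small, or using the symmetry $j\leftrightarrow n+4-j$ that is apparent in the formula). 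The main obstacle is purely the bookkeeping in the final algebraic simplification; the symmetry $j\leftrightarrow n+4-j$ and the factoring already performed in the $k=2$ case strongly constrain the answer and can be used to shortcut the calculation.
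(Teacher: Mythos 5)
Your proposal is correct and essentially the same as the paper's proof: the paper also identifies $\psi_j^{\cal D}$ as the anti-projection of the truncated quadratic $\psi_j=\{(i-j+1)(i-j+2)\mathbf{1}\{i\ge j\}/2\}_i$ (your Green's function $G_j$) onto the orthocomplement of the space of quadratics, and then evaluates the resulting polynomial sums symbolically (with Mathematica). The only cosmetic difference is that the paper obtains the norm via Gram--Schmidt and Pythagoras, $\|\psi_j^{\cal D}\|_2^2=\|\psi_j\|_2^2-\sum_{l=1}^3\langle\psi_j,e_l\rangle^2$, whereas you solve the normal equations for the correcting quadratic $P$ and sum squares directly -- two equivalent bookkeeping schemes for the same orthogonal projection.
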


\begin{proof}[Proof of Lemma \ref{duallem61}]
See Appendix \ref{dictionaryproofs.section}.

\end{proof}

\subsection{An almost minimax rate} \label{minimax.section}

Although minimax rates are not the main theme of this paper, we present a result in this direction
because it comes almost for free.

\begin{theorem} \label{dualthm12} Let $u >0$, $v>0$ and let the tuning parameter 
$\lambda $ satisfy (\ref{lambda1.equation}).  Then it holds that $\forall f \in \R^n$,
with probability at least $1-e^{-u}-e^{-v}$,
\begin{eqnarray*}
\norm{\hf-f^0}^2_n &\le& \norm{f-f^0}^2_n + 4 \lambda \norm{D f }_1
+  \left( \sqrt{\frac{k(s+1)}{n}} + \sqrt{ \frac{2v}{n}} \right)^2.
\end{eqnarray*}
\end{theorem}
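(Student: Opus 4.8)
The plan is to obtain Theorem \ref{dualthm12} as an immediate corollary of the non-adaptive bound in Theorem \ref{dualthm45}, applied with $D = \Delta(k)$ and with the augmented null space $\bar\N_{-S}$ described in Subsection \ref{dictionaryk.section}. Two things need to be verified: that the stated requirement on $\lambda$ is strong enough to imply the hypothesis (\ref{lambda.equation}) of Theorem \ref{dualthm45}, and that the dimension $\bar r_S$ of this augmented space equals $k(s+1)$.

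For the first point I would use the column-length estimates of Subsection \ref{dictionary.section}. With $\bar\N_{-S}$ taken as the direct product of $\N_{-S}$ and the span of the mock active variables $\{\psi_{t_i+1}, \ldots, \psi_{t_i+k-1}\}_{i=1}^s$, the system decouples into $s+1$ blocks and the columns of the dictionary $\Psi^{-S}$ are, blockwise, columns of $\Delta(k)^+$ restricted to the sub-intervals $[t_{i-1}+1 : t_i - 1]$. Hence Lemma \ref{duallem39} yields (\ref{lengthpsi.equation}), and therefore
$$ \max_{j \in \D \backslash S} \norm{\psi_j^{-S}}_n^2 \;\le\; \frac{(n_{\rm max}/2)^{2k-1}}{n}. $$
Since $m = n-k$ for $D = \Delta(k)$, one has $\lambda_0(u) = \sqrt{(2\log(2(n-k-s)) + 2u)/n}$, and taking square roots shows that the right-hand side of the lower bound (\ref{lambda1.equation}) on $\lambda$ equals $n^{k-1}(n_{\rm max}/(2n))^{(2k-1)/2}\lambda_0(u) \ge \max_{j \in \D \backslash S} \norm{\psi_j^{-S}}_n \lambda_0(u)$; thus (\ref{lambda1.equation}) implies (\ref{lambda.equation}).

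For the second point I would note that, since $\Delta(k)$ has full row rank $n-k$ (\cite{wang2014falling}), the $m - s = n - k - s$ linear constraints defining $\N_{-S}$ are independent, so $r_S = k + s$; adjoining the $s(k-1)$ linearly independent mock active variables then gives $\bar r_S = r_S + s(k-1) = k(s+1)$. Plugging $D = \Delta(k)$, this choice of $S$ and $\bar\N_{-S}$, and $\bar r_S = k(s+1)$ into the non-adaptive bound of Theorem \ref{dualthm45} gives exactly the asserted inequality.

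There is essentially no obstacle: once the dictionary has been set up in Subsection \ref{dictionary.section}, the result ``comes almost for free'', the only nontrivial input being the sharp column-length bound of Lemma \ref{duallem39} (established in the appendix) together with the elementary dimension count. The genuinely hard work of the paper lies instead in the construction of interpolating vectors used to control the effective sparsity for the adaptive Theorem \ref{dualthm101}, not in this minimax-type statement.
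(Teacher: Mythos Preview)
Your proposal is correct and follows essentially the same route as the paper: apply the non-adaptive bound of Theorem \ref{dualthm45} with $D=\Delta(k)$ and the augmented null space of Subsection \ref{dictionaryk.section}, noting that the column-length bound (\ref{lengthpsi.equation}) from Lemma \ref{duallem39} makes (\ref{lambda1.equation}) imply (\ref{lambda.equation}), and that $\bar r_S = k(s+1)$. The paper's own proof is a two-line pointer to precisely these facts; your write-up simply spells them out more explicitly.
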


\begin{proof}[Proof of Theorem \ref{dualthm12}]
This follows from applying the first, i.e., the  non-adaptive result of Theorem \ref{dualthm45}, and invoking that
its requirement (\ref{lambda.equation}) on the tuning parameter $\lambda$ is met if we impose the
bound given in (\ref{lambda1.equation}). 
\end{proof}

\begin{Coro}\label{minimax.corollary}
Recall that requirement (\ref{lambda1.equation}) on $\lambda$ depends on $S$ and the choice of $S$ is free
in Theorem \ref{dualthm12}. We can take $S$ such that 
$\min_{i \in [1:s+1] } n_i  \asymp n_{\rm max}$
in which case $n_{\max} /n\asymp 1/s$.  Then we may take
$$\lambda \asymp n^{k-1} \biggl ( { 1 \over s } \biggr )^{2k-1 \over 2} \sqrt {\log n \over n} .$$
Now $s$ is still a free parameter.
Choosing $s$ by a trade off, i.e., in such a way that $\lambda/ n^{k-1}  \asymp   s/n $,
we get for $n^{k-1} \| \Delta (k) f \|_1  \le  1$ (say)
$$ \|\hat f - f^0 \|_n^2 \le \| f- f^0 \|_n^2 + {\mathcal O}_{\mathbb P} ( n^{-{2k \over 2k+1 }}\log^{1 \over 2k+1} n) . $$
For $f=f^0$ this corresponds, up to the log-factor, to the minimax rate over
$\{ f^0: \ n^{k-1} \| \Delta (k) f^0 \|_1 \le 1\}$ (\cite{donoho1998minimax}). 

\end{Coro}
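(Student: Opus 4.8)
The plan is to read the result off Theorem \ref{dualthm12} by making a judicious choice of the active set $S$ and then optimising over its cardinality $s$. Observe first that in the non-adaptive bound of Theorem \ref{dualthm12} the set $S$ enters only through $\bar r_S=k(s+1)$ and through requirement (\ref{lambda1.equation}) (via $n_{\max}$ and via the $\log$-term), but not through the penalty term, which is the full $\|\Delta(k)f\|_1$. Hence $S$ may be chosen freely, independently of $f$. For $n$ large I would take $S=\{t_1,\dots,t_s\}$ with the $t_i$ roughly equally spaced, so that $n_i=t_i-t_{i-1}\asymp n/(s+1)$ for all $i\in[1:s+1]$; this is precisely the regime $\min_i n_i\asymp n_{\max}$ and it forces $n_{\max}/n\asymp 1/s$. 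With this $S$, and with a fixed (or slowly growing) $u$, requirement (\ref{lambda1.equation}) asks $\lambda$ to be at least of order $n^{k-1}(n_{\max}/(2n))^{(2k-1)/2}\sqrt{\log n/n}\asymp n^{k-1}(1/s)^{(2k-1)/2}\sqrt{\log n/n}$, so $\lambda$ of the order claimed in the corollary does meet it.

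Next I would substitute this into Theorem \ref{dualthm12}: on an event of probability at least $1-e^{-u}-e^{-v}$,
$$\|\hat f-f^0\|_n^2\le \|f-f^0\|_n^2+4\lambda\|\Delta(k)f\|_1+\Big(\sqrt{k(s+1)/n}+\sqrt{2v/n}\Big)^2 .$$
For $f$ with $n^{k-1}\|\Delta(k)f\|_1\le 1$ the penalty term is $4\lambda\|\Delta(k)f\|_1\le 4\lambda/n^{k-1}\asymp (1/s)^{(2k-1)/2}\sqrt{\log n/n}$, while the variance term is at most $2k(s+1)/n+4v/n\asymp(s+v)/n$. Then I would fix $s$ by the balancing equation $\lambda/n^{k-1}\asymp s/n$, i.e.\ $(1/s)^{(2k-1)/2}\sqrt{\log n/n}\asymp s/n$, which gives $s^{(2k+1)/2}\asymp\sqrt{n\log n}$, hence $s\asymp(n\log n)^{1/(2k+1)}$; with this choice both $4\lambda\|\Delta(k)f\|_1$ and $(s+v)/n$ (for fixed $v$) are of order $a_n:=n^{-2k/(2k+1)}\log^{1/(2k+1)}n$.

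Finally I would convert the high-probability statement into the $\mathcal{O}_{\mathbb{P}}$ remainder. Letting $u=u_n\to\infty$ slowly enough that $u_n=o(\log n)$ (so the order of $\lambda$ is unaffected) one has $e^{-u_n}\to0$, and on the remaining event, using $4\lambda\|\Delta(k)f\|_1\le 4\lambda/n^{k-1}\le C_1 a_n$ and $v/n\le v\,a_n$ (valid since $1/n=o(a_n)$),
$$\|\hat f-f^0\|_n^2-\|f-f^0\|_n^2\le 4\lambda/n^{k-1}+\Big(\sqrt{k(s+1)/n}+\sqrt{2v/n}\Big)^2\le C(1+v)\,a_n$$
for an absolute constant $C$. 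Hence $\limsup_n\mathbb{P}\big((\|\hat f-f^0\|_n^2-\|f-f^0\|_n^2)/a_n>M\big)\le e^{-(M/C-1)}\to0$ as $M\to\infty$, i.e.\ $\|\hat f-f^0\|_n^2\le\|f-f^0\|_n^2+\mathcal{O}_{\mathbb{P}}(a_n)$, which is the assertion. Taking $f=f^0$ in the class $\{f^0:n^{k-1}\|\Delta(k)f^0\|_1\le1\}$ makes the approximation term vanish, leaving $\|\hat f-f^0\|_n^2=\mathcal{O}_{\mathbb{P}}(a_n)$, and $a_n$ exceeds the minimax rate $n^{-2k/(2k+1)}$ of \cite{donoho1998minimax} only by the factor $\log^{1/(2k+1)}n$. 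The argument is essentially bookkeeping; the steps needing care are verifying that the claimed order of $\lambda$ remains admissible in (\ref{lambda1.equation}) once $n_{\max}\asymp n/s$ is imposed, and managing the confidence parameters $u,v$ so that the $\mathcal{O}_{\mathbb{P}}$ conclusion comes out cleanly — neither is a genuine obstacle.
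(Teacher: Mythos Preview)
Your argument is correct and follows essentially the same route as the paper: the corollary is stated there with its justification embedded (choose $S$ equally spaced so that $n_{\max}/n\asymp 1/s$, plug into Theorem \ref{dualthm12}, then balance $\lambda/n^{k-1}\asymp s/n$). Your write-up simply fleshes out the trade-off computation $s\asymp(n\log n)^{1/(2k+1)}$ and the passage from the high-probability bound to the $\mathcal{O}_{\mathbb P}$ statement, both of which the paper leaves implicit.
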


\subsection{Interpolating vectors and effective sparsity and  for $D = \Delta (k)$ } \label{effective-sparsity.section}
Observe that for  $D= \Delta(k)$ it holds that
$(D^{\prime} q )_{2k+j} = ((-1)^k \Delta(k) q )_{2k+j}$  for $j \in [1 : n-2k]  $ (this is in the background of partial integration).
The above observation
leads in the noiseless case to taking $q$ as piecewise $k^{\rm th}$ degree polynomial interpolation.
To avoid boundary effects, one may use an interpolation including the points $t_0:= k$ and
$t_{s+1} := n+1$, with $q_{t_0}=q_{t_{s+1}} =0 $.
Moreover, still in the noiseless case, if there is no sign change 
(i.e. $q_{t_{i-1}} q_{t_i} =1$) one can simply take $q_{t_{i-1}+j}  = q_{t_i}$ for $j \in [1, n_i-1] $.

In the noisy case one can use a similar interpolation except near the active points in $S$, where we need to change  to powers
$(2k-1)/2$ instead of $k$. This  is due to  (\ref{lengthpsi.equation}) 
and the requirement $|q_j |\le 1- w_j$ for all $j \in \D$ as used in 
Definition \ref{interpolating.definition}. It has an effect on the constants 
involved in the interpolating vector and moreover, for $t_{i-1} +j$ near active the points the absolute $k^{\rm th}$
discrete derivative 
$| \Delta (k) q_{t_{i-1} +j}  | $  behaves like $1/\sqrt j$. It follows from the next lemma
that this leads to an additional logarithmic factor
as compared to the noiseless case.

\begin{lemma}\label{sqrtj.lemma}
Let for some $d \in {\mathbb N}$, $d \ge 2k$, 
$${\bf q}_j:= j^{2k-1 \over 2} , \ j= k , \ldots , d . $$
Then for some constant $\tilde C_k$
$$ \| \Delta (k) {\bf q} \|_2^2 \le \tilde C_k^2(1+  \log d) . $$
\end{lemma}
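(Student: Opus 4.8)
The plan is to view $(\Delta(k){\bf q})_i$ as the $k^{\rm th}$ backward finite difference, with unit node spacing, of the smooth function $g(x):=x^{(2k-1)/2}$ evaluated at the integer $x=i$, and then to bound it by the $k^{\rm th}$ derivative of $g$. Since ${\bf q}_j=g(j)$ for $j\in[k:d]$, the vector $\Delta(k){\bf q}$ has entries $(\Delta(k){\bf q})_i=\sum_{l=0}^k(-1)^l\binom{k}{l}g(i-l)$, one for each $i\in[2k:d]$. The nodes $i-k,\dots,i$ all lie in $[k,d]\subset(0,\infty)$, where $g$ is $k$ times continuously differentiable, so the classical identity relating finite differences and divided differences (equivalently, the mean value theorem for finite differences) yields, for each such $i$, a point $\xi_i\in(i-k,i)$ with $(\Delta(k){\bf q})_i=g^{(k)}(\xi_i)$.

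Next I would compute $g^{(k)}$ explicitly. With $a:=(2k-1)/2$,
$$g^{(k)}(x)=a(a-1)\cdots(a-k+1)\,x^{a-k}=c_k\,x^{-1/2},\qquad c_k:=\prod_{j=0}^{k-1}\Bigl(\tfrac{2k-1}{2}-j\Bigr)=\frac{(2k-1)!!}{2^k}>0.$$
Since $x\mapsto x^{-1/2}$ is decreasing and $\xi_i\ge i-k$, this gives $|(\Delta(k){\bf q})_i|=c_k\,\xi_i^{-1/2}\le c_k\,(i-k)^{-1/2}$ for every $i\in[2k:d]$.

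It then only remains to sum these bounds:
$$\|\Delta(k){\bf q}\|_2^2=\sum_{i=2k}^{d}(\Delta(k){\bf q})_i^2\le c_k^2\sum_{i=2k}^{d}\frac{1}{i-k}=c_k^2\sum_{m=k}^{d-k}\frac1m\le c_k^2\,(1+\log d),$$
using the elementary estimate $\sum_{m=1}^N 1/m\le 1+\log N$. Taking $\tilde C_k:=c_k=(2k-1)!!/2^k$ then establishes the lemma.

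There is no genuine difficulty here; the step needing the most care is the index book-keeping — the difference operator produces exactly the entries indexed by $i\in[2k:d]$ — together with the observation that the intermediate points $\xi_i$ never leave $(0,\infty)$, the region where $g$ is smooth (this holds since $\xi_i\ge i-k\ge k\ge 1$), so that $g^{(k)}(\xi_i)$ is well defined. The finite‑difference mean value theorem is standard; alternatively, Taylor‑expanding $g(i-l)$ about the node $i-k$ to order $k-1$ with Lagrange remainder produces the same $(i-k)^{-1/2}$ bound with an equally routine computation.
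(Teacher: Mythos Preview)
Your proof is correct. Both you and the paper exploit the same underlying fact---that the $k^{\rm th}$ finite difference of $x^{(2k-1)/2}$ behaves like the $k^{\rm th}$ derivative, which is of order $x^{-1/2}$---but you package it more efficiently. The paper factors out $j^{(2k-1)/2}$, Taylor-expands $(1-l/j)^{(2k-1)/2}$ to order $k-1$, observes that the polynomial part is annihilated by $\Delta(k)$, and bounds the Lagrange remainder; this forces a case split ($j\ge 2k$ versus $k\le j<2k$) to keep $l/j\le 1/2$. By invoking the mean value theorem for finite differences directly, you sidestep that split entirely and obtain the explicit constant $\tilde C_k=(2k-1)!!/2^k$, which the paper leaves implicit. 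The index bookkeeping you flag is exactly right: since ${\bf q}$ is indexed on $[k:d]$, the output of $\Delta(k)$ is indexed on $[2k:d]$, and every intermediate point $\xi_i$ stays in $(0,\infty)$ where $g$ is smooth.
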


\begin{proof}[Proof of Lemma \ref{sqrtj.lemma}]
See Appendix \ref{sparsityproofs.section}.
\end{proof}
For each given $k$ the interpolating vector
we suggest below can be computed by solving a system of linear equations.
The missing element is that it is not clear whether the interpolation is monotone
between two active points. 
We verify the monotonicity for $k=\{1,2,3,4\}$
in Subsections \ref{effective-sparsity1.section}, \ref{effective-sparsity2.section}, \ref{effective-sparsity3.section} and \ref{effective-sparsity4.section} respectively. 
In other words, Subsection \ref{effective-sparsityk.section} presents the general idea, and
the four following subsections work out the details for $k \in\{1,2,3,4\} $. 

\subsubsection{Construction of an interpolating vector}\label{effective-sparsityk.section} 
Define 
$$ S^{\pm} := \{ i \in [2: s] : q_{t_i} q_{t_{i-1}}  = -1 \} \cup \{ 1, s+1 \}  $$
and let $t_0 =k$ and $t_{s+1} = n+1$.
We call $[t_0 : t_1 ]$ the left boundary interval and $[t_s : t_{s+1} ]$ the right boundary interval.
We assume, that 
\begin{equation}\label{minimum-length.equation}
 n_i \ge k(k+2) \ \forall \ i \in S^{\pm}  . 
 \end{equation}

 For $i \in S^{\pm}$ we split $[t_{i-1}, t_i ]$ into $k+2$ subintervals
 of equal (Lebesgue) size when $k$ is even, and into $k+1$ sub-intervals when $k$ is odd.
 We call these sub-intervals the local sub-intervals.
 By (\ref{minimum-length.equation}) we are ensured that each local sub-interval has at least 
 $k$ elements. 
We call the left (right) sub-interval  of $[t_{i-1} : t_i]$ the left (right) local boundary interval.
 The other sub-intervals of the split will be called the local interior intervals.
 We will define $q_j$ for each local sub-interval and join them by
 discrete derivatives matching, the latter having the following meaning. Let $p_1 (j)$ and $p_2 (j)$ be two functions
 of $j \in [k+1:n] $. We then say that their $(k-1)^{\rm th}$ order discrete derivatives match at the point $j_0 \in [2k+1 : n-k+1]$ 
 if $p_1 (j) = p_2(j)$ for $j \in [j_0 : j_0 + k-1 ] $.
 \begin{itemize}
 
 \item
{\it The continuous version of the interpolating  vector.}
A continuous interpolation ${\rm q} : [0, 1] \rightarrow [-1 , 1 ] $
    with ${\rm q} (0)=1 $ and ${\rm q} (1) = -1 $ can be constructed as follows. 
    We choose ${\rm q}$ anti-symmetric around $x= 1/2$, i.e. given
    ${\rm q} (x)$ for $x \in [0 , 1/2] $ we let ${\rm q} (x) := - {\rm q}(1-x)$ for $x \in [1/2 ,1] $. 
    We split $[0,1]$ into $N$ intervals of equal size where $N=k+2$ if $k$ is even, and $N=k+1$  if $k$ is odd. Call these sub-intervals $\{ [x_{l-1}, x_l]\}_{l=1}^N$ (thus $x_0=0$, $x_N=1$ and $x_{N/2}=1/2$). For $x\in [x_0 , x_1 ] $ we let
    $$ {\rm q} (x):= 1- {\rm a}_0  x^{2k-1 \over 2} $$
    where the constant ${\rm a}_0 > 0 $ is to be determined.
    For $x\in [x_{N-1} , x_N ]$ we then have
    $${\rm q} (x)= -1+{\rm a}_0  (1-x)^{2k-1 \over 2} .$$ For $x \in [x_{N/2 -1} , x_{N/2 +1} ]$ we let
    ${\rm q}(x)=   {\rm a}_L (1/2-x)^L + \cdots + {\rm a}_1 (1/2-x) $ be a linear combination of odd powers of $(1/2-x )$
    where $L= k-1$ if $k $ is even and $L=k$ if $k$ is odd. By the anti-symmnetry, it remains do define
    ${\rm q} (x)$ for $x \in [x_1 , x_{N/2-1}]$. 
    For $x \in [ x_{l-1} , x_l ] $ with
    $l \in \{ 2 , \ldots , N/2 -1 \} $ we let ${\rm  q}(x) := {\rm b}_{l,k} x^k + \cdots +  {\rm b}_{l,1} x + {\rm b}_{l,0}   $
    be a polynomial of degree $k$. We choose the coefficients $\{  \{ {\rm a}_j \} , \{ {\rm b}_{l,j}  \} \}  $
    by derivatives matching: solving 
  a linear system with $ k (k/2)$ equations 
 with $ k (k/2)$ unknowns when $k$ is even, and $k (k-1) / 2 $ equations with $k (k-1) / 2 $ unknowns
 when $k$ is odd. 
    The resulting function ${\rm q}: [0,1 ] \rightarrow \R$ is interpolating
  between $+1$ and $-1$  and it is continuous with $k-1$ continuous derivatives, 
  such that the $k^{\rm th}$ left derivative is piece-wise constant except on
  the left boundary interval where it behaves like $-1 / \sqrt x$ and the right boundary
  interval where it behaves like $  (-1)^{k} / \sqrt {1 - x} $. 
  For 
  a given $k$, the coefficients $\{ \{ {\rm a}_j \} , \{ {\rm b}_{l,j} \} \}  $ can be
  given explicitly and it can then be checked whether ${\rm q}$ is a
  decreasing function on the  interval $[0,1]$ (or stays away from $\pm 1 $).
  We did this for $k\in \{1,2,3,4\}$ below.  

 \item
 {\it The case of a sign  change.}
 If $q_{t_{i}} q_{ t_{i-1}}=-1$ we apply a discrete version of the continuous function
 ${\rm q}$ described above. One way to do this is using the map
 ${t_{i-1} +j} \mapsto  q_{t_{i-1}} {\rm q} (j/ n_i )$ for $j\in [1: n_i-1]$. Alternatively, one
 may apply discrete derivatives matching. 
 We take $q_{t_{i-1} +j} $ anti-symmetric around the midpoint of $[t_{i-1} : t_i ] $. 
 We choose $q_{t_{i-1}} q_{t_{i-1} +j } := 1- a_0 (j/ n_i )^{2k-1 \over 2} $ for $j$ in the left local boundary
 interval (and thus $q_{t_{i-1}} q_{t_i -j }= -1 + a_0(1- j/ n_i )^{2k-1 \over 2 } $ at the right local boundary  interval) where $a_0>0 $ depends on $k$ and $n_i$. 
 At the two local interior intervals around the midpoint of $[t_{i-1}, t_i ]$ we take
 $q_{t_{i-1} +j } $ a linear combination of odd powers $l \le k$ of $(j- n_i/2)$. 
 At the other interior intervals we let $q_{t_{i-1} +j } $ be a polynomial of degree $k$.
 Then we choose the coefficient $a_0$ and the coefficients of the polynomials by discrete derivatives
 matching.  
For   $
  \min_{i \in S^{\pm} } n_i \rightarrow \infty$ the coefficient $a_0$ 
  converges to its continuous counterpart  ${\rm a}_0$. We conclude
  $a_0 \asymp 1$. The same is true for the other coefficients in the interpolation.

 \item
{\it The boundary intervals.}
We set $q_{t_0} = 0$, and
$\{ q_{t_0 +j} \}_{j=1}^{n_1-1} $ an interpolating vector constructed as for the case of a sign change,
except that we now interpolate between $0$ and $ \pm 1 $
instead of between $1$ and $-1$.
A similar construction is made for the right boundary interval $ [ t_{s}: t_{s+1} ]$ where we set
$q_{t_{s+1} } =0 $.

\item
{\it The case of no sign change.}
When $q_{t_i} q_{t_{i-1} }=1$ we take
$$ q_{t_i} q_{t_{i-1}+j } := 1- \biggl ( {4  j (n_i-j) \over n_i n_{\max}}\biggr )^{2k-1 \over 2 } , \ j=[1 : n_i -1].  $$

\item
 {\it Joining the intervals $\{ [t_{i-1} : t_i ]\}_{i=1}^{s+1}$.}
 By the above construction, the first order differences for $j \in [t_{i-1} :t_{i-1} + k  ]$ 
 or $j\in [t_{i}-k : t_{i}  ]$
  are all of order $n_i^{-{(2k-1) / 2}}$ and in fact of order $n_{\rm max} ^{-{(2k-1) /2}}$
  if there is no sign change.
  This means we can glue the interpolations for the intervals
  $\{[ t_{i-1}, t_i ] \}_{i=1}^{s+1}$ together and have the $k^{\rm th}$ discrete derivatives matching up to
 a finite (depending on $k$) number  of terms of order $ n_i^{-{(2k-1 )/ 2} } $
 (or even $n_{\rm max} ^{-{(2k-1) / 2}}$).

 \item{\it The $k^{\rm th}$ derivative of $q$.}
  The following bound holds: 
  for some 
  constant $C_k$
  \begin{eqnarray*}
   n \| \Delta (k)^{\prime} q \|_2^2 \le   C_k n \biggl  [
 \sum_{i \in S^{\pm} } { 1+ \log n_i \over n_i^{2k+1} } 
+  \sum_{i \notin S^{\pm} } { 1 + \log n_i \over n_{\rm max}^{2k+1}  }   \biggr  ] . 
\end{eqnarray*}
 This follows from the construction of $q$ and from Lemma \ref{sqrtj.lemma}.

\item
{\it The requirement $q_j \le 1- w_j$, $j \in \D \backslash S$.} 
For $i \notin S^{\pm}$ we have $|q_{t_{i-1} +j}| \le 1-w_{t_{i-1}+j} $, $j\in [1 : n_i-1]$ by construction as long as 
$\lambda $ satisfies (\ref{lambda1.equation}).  To conclude the same for  
$i \in S^{\pm}$
we strengthen the requirement
(\ref{lambda1.equation}) to: for an appropriate constant $c_k\ge 1$ depending only on $k$
\begin{equation}\label{lambdak.equation}
\lambda \ge c_k n^{k-1} \biggl ({n_{\rm max} \over 2n }  \biggr )^{2k-1 \over 2}   
\sqrt{2 \log (2(n-s-k))+ 2u) \over n}. 
\end{equation}
It can be shown that if the interpolations for $i \in S^{\pm}$ are monotone, then  one can take
as $\min_{i \in S^{\pm}} n_i \rightarrow \infty$
\begin{equation}\label{ck.equation}
c_k  \rightarrow \begin{cases}  {  2({ k+2  } )^{2k-1 \over 2} / {\rm a}_0} , & k \ {\rm even}\cr
{ 2 ({ k+1  }  )^{2k-1 \over 2} / {\rm a}_0} , & k \ {\rm odd}\cr  \end{cases} . 
\end{equation} 
\end{itemize}
However,
it is not a priori clear to us that the interpolations
$i \in S^{\pm}$ are monotone. We check this for $k= \{1,2,3,4\}$ in the next 4 subsections.

\subsubsection{Interpolating vector and effective sparsity for $k=1$} \label{effective-sparsity1.section}

In the noiseless case and
when $k=	1$ we take a linear interpolation of $(q_1:= 0, q_S^{\prime} , q_{n+1}:=0)$.
At a sign change: $q_{t_{i-1} } q_{t_i} =-1$ we take a linear interpolation 
between plus and minus one over an interval
of length $n_i$. The slope in this interval will then be $2/n_i$, which gives a contribution
$4/n_i$ to the bound for the effective sparsity. Similar observations can be
made for the boundary interval $[t_0: t_1]$ where we face a  boundary effect
due to partial
integration
because $q_2 = 1/ n_1 \not=0 $. For the right boundary interval $[t_s: t_{s+1}]$ we see the same boundary effect.
So for $k=1$
$$ \Gamma^2 (q_S) \le {n \over n_1^2} + {n \over n_1}+  \sum_{q_{t_i}  q_{t_{i-1}}=-1} {4 n \over n_i }   +
{n \over n_{s+1} } +{n \over n_{s+1}^2 }  .$$
\begin{remark}
Because the interpolating vector $q$ can be chosen to be constant  between consecutive entries of $q_S$ having the same sign, a ``staircase pattern'' -  consecutive entries of $(Df)_S$ having the same sign - seems to favour prediction, while for $f=f^0$ it is known to negatively affect model consistency (\cite{qian16}).
\end{remark}

In the noisy case, and when $i \in S^{\pm}$ we use a scaled discrete variant of
$${\rm q}(x) := \begin{cases} +1- \sqrt { 2 x} ,& 0 \le x \le 1/2 \cr -1 + \sqrt {2 (1-x)} , & 1/2 \le x \le 1 \cr
\end{cases} . $$
When $q_{t_{i-1}}q_{t_i}=-1$ we let
\begin{equation}\label{change1.equation}
q_{t_{i-1} +j} q_{t_{i-1}}  := {\rm q} ( j /n_i ) =
 \begin{cases} +1- \sqrt { 2 j / n_i} , &  j  \in [1:  n_i /2]  \cr -1 + \sqrt { 2 (n_i - j) / n_i}  ,&  j \in [n_i/2:  n_i ]  \cr
\end{cases} . 
\end{equation}
At the boundary intervals we let $q_1:=q_{n+1}:=0$,
\begin{eqnarray}\label{boundaryleft1.equation}
 q_{1+j } q_{t_1}  :=  \begin{cases}  \sqrt {j   / (2 n_1)} ,&  j \in [1:     n_1/2 ] \cr
1-  \sqrt { (n_1-j )/ (2n_1) }, &  j \in [n_1/2:  n_1] \cr 
 \end{cases} , 
 \end{eqnarray}
 and
 \begin{eqnarray}\label{boundaryright1.equation} 
q_{t_s+j } q_{t_s} :=  \begin{cases} 1-  \sqrt {   j /(2 n_{s+1} ) }, &   j \in [1:  n_{s+1} /2 ]\cr 
\sqrt { (n_{s+1} - j ) / (2 n_{s+1 } ) } , & j \in [n_{s+1}/2 :n_{s+1} ]  \cr 
\end{cases}  .
\end{eqnarray}
If $q_{t_i}q_{t_{i-1}}=1$ we take for $j \in [ 1: n_i-1] $
\begin{equation}\label{nochange1.equation}
 q_{t_{i-1} +j} q_{t_{i-1}}:= 1-  \sqrt {4 j ( n_i -j ) / (n_i n_{\rm max}) } . 
\end{equation}
In other words, at locations $t_i$, with $i\in [2: s]$, where the signs do not change (i.e.\
$q_{t_i } = q_{t_{i-1} }$) one may choose ``less steep" 
interpolations.

With this choice for $q$ we get by straightforward
calculations: for $\lambda $ satisfying (\ref{lambdak.equation}) with $c_1=1$, and for a universal constant $C_1$
\begin{eqnarray*}
\Gamma^2 (q_{S} , w_{-S}) & \le & n \| \Delta (1)^{\prime} q \|_2^2 \\ &\le & C_1 n \biggl  [
{ 1 + \log n_1 \over n_1} + \sum_{q_{t_i}q_{t_{i-1}}=-1}  { 1+ \log n_i \over n_i } \\
&+ & \sum_{q_{t_i} q_{t_{i-1}} =1} { 1 + \log n_i \over n_{\rm max} } + {1 + \log n_{s+1} \over n_{s+1 }  }  \biggr  ] . 
\end{eqnarray*}

\subsubsection{Interpolating vector and effective sparsity for $k=2$} \label{effective-sparsity2.section}
The splitting scheme for the noisy case of Subsection \ref{effective-sparsityk.section} applied to $k=2$
gives as continuous interpolation
$${\rm q} (x) := \begin{cases} +1- {2 \over 5} (4 x)^{3/2} ,& 0 \le x \le {1 \over 4}  \cr 
{3 \over 5} 4 ({1 \over 2} - x ) , & {1 \over 4} \le x \le { 3 \over 4} \cr
-1 + {2 \over 5} 4^{3/2}(1-x)^{3/2} , & { 3 \over 4} \le x \le 1 \cr 
\end{cases}\ \  . $$
Alternatively, we may use a simpler solution, namely
$${\rm q}_{\rm alt} (x) := \begin{cases} +1- (2 x)^{3/2} ,& 0 \le x \le {1 \over 2}  \cr -1 + ( 2 ( 1- x))^{3/2}, & {1 \over 2}  \le x \le 1 \cr  \end{cases} . $$
Then ${\rm q}_{\rm alt}$ is decreasing, ${\rm q}_{\rm alt}$ and ${\rm q}_{\rm alt}^{\prime} $ are continuous and
$$ {\rm q}_{\rm alt}^{\prime \prime} (x) = \begin{cases} - {3 \over \sqrt {2 x}},  &0 < x <  {1 \over 2} \cr
+ {3 \over \sqrt {2(1-x)}  }, & {1 \over 2} \le  x < 1 \cr\end{cases}\ . $$
Consider now a sign change: $q_{t_i}q_{t_{i-1}}=-1$. We may assume without loss of generality that
$n_i$ is even. (If $n_i$ is odd we take $q_{t_{i-1} +1} =q_{t_{i-1} } $ - which is possible because $t_{i-1}+1$ is set to be a
``mock" active variable - and replace $n_i$ by $n_i-1$.)  
 For $j \in [ 1: n_i/2] $ define 
 \begin{equation}\label{change2.equation}
q_{t_{i-1} +j} q_{t_{i-1}}  := {\rm q}_{\rm alt} ( j /n_i ) =
 \begin{cases} +1- ( 2 j / n_i)^{3/2} , & j \in [ 1: n_i/2 ]  \cr -1 + (2 (n_i - j) / n_i)^ {3/2} ,  & j \in [n_i/2 : n_i ] \cr
\end{cases} . 
\end{equation}
At the boundary intervals, say the left boundary interval, we let $q_2=0$. Again we may without loss of generality assume
$n_1$ is even (otherwise we let $q_3=0$ and replace $n_1$ by $n_1-1$).  Then let
\begin{eqnarray}\label{boundary2.equation}
 q_{2+j} q_{t_1}  :=  \begin{cases}  \sqrt 2(j   /  n_1)^{3/2} , &  j  \in [1:   n_1/2] \cr
1-   \sqrt 2 ( (n_1-j ) / n_1 )^{3/2} , & j \in [n_1 /2 : n_1] \cr 
 \end{cases} \ \ . 
\end{eqnarray}
Finally, if there is no sign change: $q_{t_i}q_{t_{i-1}}=1$, we let
\begin{equation}\label{nochange2.equation} 
q_{t_{i-1} +j} q_{t_{i-1}}:= 1-  \biggl ( {4 j ( n_i -j ) \over  n_i n_{\rm max} } \biggr )^{3/2} .  
\end{equation}
Thus when $\lambda$  satisfies (\ref{lambdak.equation}) for an appropriate constant $c_2$,
then for a constant $C_2$ the bound (\ref{gammabound.equation}) is true for the effective
sparsity.

\subsubsection{Interpolating vector and effective sparsity for $k=3$} \label{effective-sparsity3.section}
For the noisy case, we invoke a scaled and discrete variant of
\begin{equation}\label{q3.equation}
{\rm q}(x): = \begin{cases}  +1- {4 \over 19 } (4x)^{5 \over 2}, & 0 \le x \le {1 \over 4} \cr 
-{5  \over 38} 4^3 ({1 \over 2} - x )^3 + { 35  \over 38} 4 ({1 \over 2} - x ) ,& {1 \over 4 } \le x \le {3 \over 4 } \cr
-1+ {4  \over 19} 4^{5/2} (1- x)^{5 \over 2} ,& { 3 \over 4} \le x \le 1 \cr 
\end{cases} \ .  
\end{equation} 
Note that  ${\rm q}$ is decreasing, ${\rm q}$, ${\rm q}^{\prime}$ and ${\rm q}^{\prime \prime} $ are continuous, and 
$$ {\rm q}^{\prime \prime \prime} (x) = \begin{cases}-{15 \times 16 \over 19}  {1 \over \sqrt x}, & 0 < x < {1 \over 4} \cr
{ 30 \times 32 \over 19},  & {1\over 4}  \le  x <  {3 \over 4} \cr -{ 15 \times 16  \over 19}  {1 \over \sqrt {1-x}}  ,&
{3 \over 4}  \le x < 1\cr 
 \end{cases}  \ . $$
 
 If $n_i / 4 \in {\mathbb N}$ the  rescaled and discrete variant when $q_{t_i}=-1$, $ q_{t_{i-1}} =1$ is
 \begin{equation}\label{change3.equation} 
 { q}_{t_i+j} :=  \begin{cases} 1- \bar a_0 { (4j/n_i)^{5/2} } , & 1 \le j \le n_i/4 \cr
 -  \bar a_3 4^3 { ((n_i/2  - j )/n_i )^3 }+  \bar a_1 4{ (n_i/2 -j) / n_i  } , & n_i / 4 \le j \le 3 n_i/4  \cr
 -1+  \bar a_0 { 4^{5/2} ((n_i -j)/ n_i)^{5/2}  } , & 3n_i/4 \le i \le n_i \cr 
   \end{cases} \ \ . 
\end{equation}
where $\bar a_0$, $\bar a_1$ and $\bar a_3$ can be calculated using
the following lemma with $d= n_i / 4$.
(In the notation of Subsection \ref{effective-sparsityk.section} $a_0 =  4^{3/2} \bar a_0 $,
$a_1= 4 \bar a_1 $ and $a_3= 4^3 \bar a_3 $. )
\begin{lemma} \label{q3.lemma} Let $d \in {\mathbb N}$ and define
\begin{eqnarray*}
 \alpha_1&:=&{[\Delta (d+1)^{5/2} ]  \over d^{3/2} }
 =  { (d+1)^{5/2} - d^{5/2} \over d^{3/2} },  \\
   \gamma_1&:= &{[\Delta d^3] \over d^2} := { d^3 - (d-1)^3 \over d^2 } ,\\
    \alpha_2 &:=& { [\Delta (2)(d+2)^{5/2} ]\over d^{1/2}} :={  [\Delta (d+2)^{5/2} ]  - [\Delta (d+1)^{5/2} ]   \over d^{1/2} },\\
\gamma_2 &:=& { [\Delta(2)  d^3 ]\over d }:=  { [\Delta d^3 ] - [ \Delta (d-1)^3 ] \over d } . 
\end{eqnarray*}
Let
\begin{eqnarray*} 
\bar a_0 &: = & {\gamma_2\over \gamma_2 - \alpha_2 + (\gamma_1 \alpha_2+ \alpha_1 \gamma_2 ) }, \\
 \bar a_3 &:= & { \alpha_2  \over \gamma_2 - \alpha_2 + (\gamma_1 \alpha_2 + \alpha_1 \gamma_2 ) } ,\\
 \bar a_1 &:=& {\gamma_1 \alpha_2+ \alpha_1 \gamma_2  \over \gamma_2 - \alpha_2 + (\gamma_1 \alpha_2+ \alpha_1 \gamma_2 ) }  ,
 \end{eqnarray*}
and for $j\in \{ d, d+1 , d+2 \}$
\begin{eqnarray*}
{\bf q}_j &:=& 1- \bar a_0 {j^{5/2} / d^{5/2} } ,\\
{\bf p}_j& := & -\bar a_3 {(2d - j )^3 / d^3}+ \bar a_1 { (2d-j) / d } . 
\end{eqnarray*}
Then
$$ \Delta (l) { \bf q}_{d+l} = \Delta(l)  {\bf  p}_{d+l}, \ l \in \{ 0,1,2\} . $$
\end{lemma}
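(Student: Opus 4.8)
The proof is a direct algebraic verification; I expect the only point requiring care to be the sign bookkeeping noted at the end. First I would observe that for $l\in\{0,1,2\}$ the quantity $(\Delta(l)v)_{d+l}$ depends only on the coordinates $v_d,\dots,v_{d+l}$, and that the linear map
\[
(v_d,v_{d+1},v_{d+2})\ \longmapsto\ \bigl((\Delta(0)v)_d,\ (\Delta(1)v)_{d+1},\ (\Delta(2)v)_{d+2}\bigr)
\]
has, in the standard basis, the matrix with rows $(1,0,0)$, $(-1,1,0)$, $(1,-2,1)$; this matrix is unit lower triangular, hence invertible. Consequently the three asserted identities $\Delta(l){\bf q}_{d+l}=\Delta(l){\bf p}_{d+l}$, $l\in\{0,1,2\}$, are equivalent to the pointwise equalities ${\bf q}_j={\bf p}_j$ for $j\in\{d,d+1,d+2\}$, and it is convenient to establish them by checking the three discrete-difference identities one at a time, since the definitions of $\alpha_1,\gamma_1,\alpha_2,\gamma_2$ are stated in terms of differences.

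Next I would evaluate both sides of each identity using those definitions. For $l=0$ one gets ${\bf q}_d=1-\bar a_0$ and ${\bf p}_d=\bar a_1-\bar a_3$. For $l=1$ one has ${\bf q}_{d+1}-{\bf q}_d=-\bar a_0[\Delta(d+1)^{5/2}]/d^{5/2}=-\bar a_0\alpha_1/d$, while in ${\bf p}_{d+1}-{\bf p}_d$ the term linear in $j$ contributes $-\bar a_1/d$ and the cubic term contributes $-\bar a_3[(d-1)^3-d^3]/d^3=\bar a_3[\Delta d^3]/d^3=\bar a_3\gamma_1/d$ (the sign reversal being caused by the argument $2d-j$ decreasing in $j$), so ${\bf p}_{d+1}-{\bf p}_d=(\bar a_3\gamma_1-\bar a_1)/d$. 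For $l=2$ the part of ${\bf p}$ linear in $j$ is annihilated by the second difference, and $\Delta(2){\bf q}_{d+2}=-\bar a_0[\Delta(2)(d+2)^{5/2}]/d^{5/2}=-\bar a_0\alpha_2/d^2$, $\Delta(2){\bf p}_{d+2}=-\bar a_3[\Delta(2)d^3]/d^3=-\bar a_3\gamma_2/d^2$. Hence the three identities amount to the linear system
\begin{align*}
\bar a_0+\bar a_1-\bar a_3&=1,\\
-\alpha_1\bar a_0+\bar a_1-\gamma_1\bar a_3&=0,\\
\alpha_2\bar a_0-\gamma_2\bar a_3&=0,
\end{align*}
in the unknowns $\bar a_0,\bar a_1,\bar a_3$.

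Finally I would check that the values given in the statement solve this system. Writing $D_0:=\gamma_2-\alpha_2+\gamma_1\alpha_2+\alpha_1\gamma_2$ for the common denominator of the three formulas, the first equation reads $(\gamma_2-\alpha_2+\gamma_1\alpha_2+\alpha_1\gamma_2)/D_0=1$, which is exactly the definition of $D_0$; the second reads $(-\alpha_1\gamma_2+\gamma_1\alpha_2+\alpha_1\gamma_2-\gamma_1\alpha_2)/D_0=0$; and the third reads $(\alpha_2\gamma_2-\gamma_2\alpha_2)/D_0=0$. All three hold, which gives the lemma. (Equivalently, the coefficient matrix of the system has determinant $-D_0$, and Cramer's rule applied to it returns precisely the stated $\bar a_0,\bar a_1,\bar a_3$.) The only step I expect to be delicate is the sign bookkeeping when translating between the forward differences $[\Delta\,\cdot\,]$ and $[\Delta(2)\,\cdot\,]$ that define $\alpha_i,\gamma_i$ and the finite differences of ${\bf q}$ and ${\bf p}$ taken at $d,d+1,d+2$ — in particular the reversal induced by the $2d-j$ in the definition of ${\bf p}$.
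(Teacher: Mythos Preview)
Your proof is correct and follows essentially the same approach as the paper: you compute $\Delta(l)\mathbf{q}_{d+l}$ and $\Delta(l)\mathbf{p}_{d+l}$ for each $l\in\{0,1,2\}$ using the definitions of $\alpha_1,\alpha_2,\gamma_1,\gamma_2$, and verify that the stated values of $\bar a_0,\bar a_1,\bar a_3$ make them coincide. Your explicit formulation as a linear system (and the Cramer's rule remark) is a slight organizational addition, but the underlying computations and sign bookkeeping match the paper's line by line.
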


\begin{proof}[Proof of Lemma \ref{q3.lemma}]
See Appendix \ref{sparsityproofs.section}.
\end{proof}

The values of the parameters $\bar a_0$, $\bar a_1$ and $\bar a_3$ in the above  lemma depend on $d$, but one easily checks that 
for $d \rightarrow \infty$:
$\alpha_1 \approx 5/2$, $\gamma_1 \approx 3$, 
$\alpha_2 \approx 15/4$ and $\gamma_2 \approx 6$.
Hence $\bar a_0 \approx {4 \over 19} $, $\bar a_3 \approx 5/ 38 $ and $\bar a_1 \approx 35/38 $ as in (\ref{q3.equation}).
If $n_i / 4 \notin {\mathbb N} $ we have similar calculations: the discrete derivatives are then
to match  at - say - $\lfloor n_i / 4  \rfloor$ and $\lceil 3 n_i / 4 \rceil$.
(By the same arguments as for $k=2$ one may without loss of generality assume
that $n_i$ is even.) 

For the boundary intervals we have similar expressions and
when $q_{t_{i-1}}q_{t_{i} }= 1$ we take $q_j$, $j \in [t_{i-1}: t_i ] $,  as for the general $k$ case.
This gives when $\lambda$  satisfies (\ref{lambdak.equation}) for some appropriate constant $c_3$,
then for a constant $C_3$ the bound (\ref{gammabound.equation}) for the effective sparsity.

\subsubsection{Interpolating vector and effective sparsity for $k=4$}\label{effective-sparsity4.section}
For $k=4$ and $i \in S^{\pm}$ we take a scaled and discrete version of the function
${\rm q} : [0 ,1/2] \rightarrow [0, 1]$ defined
(up to rounding errors) as,
$${\rm q} (x):= \begin{cases} 1- (18.62) x^{7/2}  ,& 0 \le x \le {1 \over 6} \cr 
 (44.34) x^4  -(46.19) x^3+ (10.16)x^2 -(1.10)  x +1.05, & {1 \over 6}  \le x \le {1 \over 3}  \cr 
-  (12.93 )(1/2-x)^3 + (4.23)(1/2-x) , & {1 \over 3}  \le x \le {1 \over 2}  \cr   \end{cases} \ . $$

%
%
%
\begin{figure}
\centering 
\includegraphics[width=2.5in,height=2.5in]{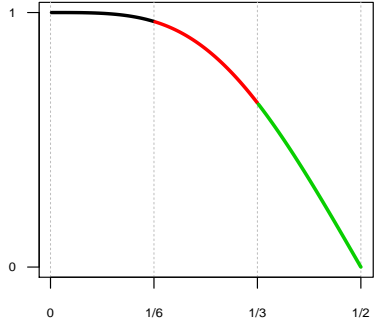}
\caption{The function ${\rm q} : [0, 1/2] \rightarrow [0,1]$ for $k=4$.} 
\end{figure}


 The function ${\rm q}$ is decreasing 
 with ${\rm q}$, ${\rm q}^{\prime}$, ${\rm q}^{\prime \prime}$ ${\rm q}^{\prime \prime \prime} $
 continuous. It was calculated by solving 8 equations with 8 unknowns, following  the description
 in Subsection \ref{effective-sparsityk.section}.
We can now reason as in Subsection \ref{effective-sparsity3.section} to obtain  for
$\lambda$ satisfying  (\ref{lambda1.equation}) the bound (\ref{gammabound.equation}) for the effective
sparsity where $c_k$ and $C_k$ are appropriate constants.

%
%
%

\subsection{Proof of Theorem \ref{dualthm101} } \label{proofmain.section}
Taking $\bar \N_{-S}$ as  the direct product of $\N_{-S}$ and an appropriate space
spanned by $(k-1)s$ additional variables, we derived 
in Subsection \ref{dictionaryk.section} a bound for the length of the columns
of the dictionary $\Psi^{-S}$. With these, we saw that the requirement (\ref{lambda.equation}) for $\lambda$ can
is true when (\ref{lambda1.equation}) holds. Then in Subsections \ref{effective-sparsity1.section}, 
\ref{effective-sparsity2.section}, \ref{effective-sparsity3.section} and \ref{effective-sparsity4.section} we derived a bound for 
the effective sparsity $\Gamma ( q_S, w_{-S})$
when (\ref{lambda1.equation}) is strengthened to (\ref{lambdak.equation}). 
Theorem \ref{dualthm101} thus follows from the adaptive bound  in Theorem \ref{dualthm45}
for the general analysis problem. 
\hfill $\sqcup \mkern -12mu \sqcap$

\section{Conclusion}\label{dualsec12}


The sharp oracle inequalities with fast rates show that for $k \in \{1 ,2,3,4 \}$ the estimator adapts to the unknown number of jumps in the $(k-1)^{\text{th}}$ discrete derivative and provide finite-sample prediction bounds. In particular, these show that the prediction error of the total variation regularized estimator is upper bounded by the optimal trade-off between approximation error and estimation error.  The key tool for providing these results is bounding the
effective sparsity using interpolating vectors. This approach allows extension to other problems
as well, for instance higher dimensional extensions. See  \cite{Ortelli2d} where the Hardy-Krause variation
serves as regularizer. For total variation on graphs, one may apply the fact that the dictionary can
be formed by counting the number of times an edge is used when traveling from a given
node to all other nodes. This can then be done on the sub-graphs formed by removing the active edges.
For graphs with cycles there are several paths from one node to another. One may then
choose those that allow for a smooth interpolating vector. Finally, the approach can be extended to estimation problems with
$\ell_1$-penalty on the discrete derivative but 
loss functions other than least squares (see  \cite{vdG2020} for the case of logistic loss
with total variation penalty on the canonical parameter).

{\bf Acknowledgements.}  We acknowledge support for this project
from the the Swiss National Science Foundation (SNF grant 200020\_169011). We thank the Associate Editor and two Referees for their very helpful remarks.

\bibliographystyle{plainnat}

\bibliography{library}

\begin{thebibliography}{27}
\providecommand{\natexlab}[1]{#1}
\providecommand{\url}[1]{\texttt{#1}}
\expandafter\ifx\csname urlstyle\endcsname\relax
  \providecommand{\doi}[1]{doi: #1}\else
  \providecommand{\doi}{doi: \begingroup \urlstyle{rm}\Url}\fi

\bibitem[Belloni et~al.(2011)Belloni, Chernozhukov, and Wang]{bell11}
Alexandre Belloni, Victor Chernozhukov, and Lie Wang.
\newblock {Square-root lasso: pivotal recovery of sparse signals via conic
  programming}.
\newblock \emph{Biometrika}, 98\penalty0 (4):\penalty0 791--806, 2011.

\bibitem[Boyer et~al.(2017)Boyer, De~Castro, and Salmon]{boyer2017adapting}
Claire Boyer, Yohann De~Castro, and Joseph Salmon.
\newblock Adapting to unknown noise level in sparse deconvolution.
\newblock \emph{Information and Inference: A Journal of the IMA}, 6\penalty0
  (3):\penalty0 310--348, 2017.

\bibitem[Cand{\`{e}}s and Fernandez-Granda(2014)]{cand14}
Emmanuel Cand{\`{e}}s and Carlos Fernandez-Granda.
\newblock {Towards a mathematical theory of super-resolution}.
\newblock \emph{Communications on Pure and Applied Mathematics}, 67\penalty0
  (6):\penalty0 906--956, 2014.

\bibitem[Cand\`es and Plan(2011)]{candes2010probabilistic}
Emmanuel Cand\`es and Yaniv Plan.
\newblock A probabilistic and {RIP}less theory of compressed sensing.
\newblock \emph{IEEE Transactions on Information Theory}, 57\penalty0
  (11):\penalty0 7235--7254, 2011.

\bibitem[Chatterjee and Goswami(2019)]{chatterjee2019adaptive}
Sabyasachi Chatterjee and Subhajit Goswami.
\newblock Adaptive estimation of multivariate piecewise polynomials and bounded
  variation functions by optimal decision trees.
\newblock \emph{arXiv preprint arXiv:1911.11562}, 2019.

\bibitem[Dalalyan et~al.(2017)Dalalyan, Hebiri, and Lederer]{dala17}
Arnak Dalalyan, Mohamed Hebiri, and Johannes Lederer.
\newblock {On the prediction performance of the lasso}.
\newblock \emph{Bernoulli}, 23\penalty0 (1):\penalty0 552--581, 2017.

\bibitem[Donoho and Johnstone(1998)]{donoho1998minimax}
David Donoho and Iain Johnstone.
\newblock Minimax estimation via wavelet shrinkage.
\newblock \emph{Annals of Statistics}, 26\penalty0 (3):\penalty0 879--921,
  1998.

\bibitem[Elad et~al.(2007)Elad, Milanfar, and Rubinstein]{elad07}
Michael Elad, Peyman Milanfar, and Ron Rubinstein.
\newblock {Analysis versus synthesis in signal priors}.
\newblock \emph{Inverse Problems}, 23\penalty0 (947), 2007.

\bibitem[Guntuboyina et~al.(2020)Guntuboyina, Lieu, Chatterjee, and
  Sen]{gunt17}
Adityanand Guntuboyina, Donovan Lieu, Sabayasachi Chatterjee, and Bodhisattva
  Sen.
\newblock Adaptive risk bounds in univariate total variation denoising and
  trend filtering.
\newblock \emph{Annals of Statistics}, 48\penalty0 (1):\penalty0 205--229,
  2020.

\bibitem[Kim et~al.(2009)Kim, Koh, Boyd, and Gorinevsky]{kim2009ell_1}
Seung-Jean Kim, Kwangmoo Koh, Stephen Boyd, and Dimitry Gorinevsky.
\newblock $\ell 1$ trend filtering.
\newblock \emph{SIAM review}, 51\penalty0 (2):\penalty0 339--360, 2009.

\bibitem[Laurent and Massart(2000)]{laur00}
B{\'{e}}arice Laurent and Pascal Massart.
\newblock {Adaptive estimation of a quadratic functional by model selection}.
\newblock \emph{Annals of Statistics}, 28\penalty0 (5):\penalty0 1302--1338,
  2000.

\bibitem[Mammen and van~de Geer(1997)]{mamm97-2}
Enno Mammen and Sara van~de Geer.
\newblock {Locally adaptive regression splines}.
\newblock \emph{Annals of Statistics}, 25\penalty0 (1):\penalty0 387--413,
  1997.

\bibitem[Ortelli and van~de Geer(2019)]{Ortelli2d}
Francesco Ortelli and Sara van~de Geer.
\newblock Oracle inequalities for image denoising with total variation
  regularization.
\newblock \emph{arXiv preprint arXiv:1911.07231}, 2019.

\bibitem[Ortelli and van~de Geer(2020)]{orte19-2}
Francesco Ortelli and Sara van~de Geer.
\newblock {Oracle inequalities for square root analysis estimators with
  application to total variation penalties}.
\newblock \emph{Information and Inference: A Journal of the IMA}, \penalty0
  (iaaa002), 2020.

\bibitem[Qian and Jia(2016)]{qian16}
Junyang Qian and Jinzhu Jia.
\newblock {On stepwise pattern recovery of the fused Lasso}.
\newblock \emph{Computational Statistics and Data Analysis}, 94:\penalty0
  221--237, 2016.

\bibitem[Rudin et~al.(1992)Rudin, Osher, and Fatemi]{rudi92}
Leonid Rudin, Stanley Osher, and Emad Fatemi.
\newblock {Nonlinear total variation based noise removal algorithms}.
\newblock \emph{Physica D}, 60:\penalty0 259--268, 1992.

\bibitem[Sadhanala and Tibshirani(2017)]{sadh17}
Veeranjaneyulu Sadhanala and Ryan~J Tibshirani.
\newblock {Additive Models with Trend Filtering}.
\newblock \emph{arXiv:1702.05037v4}, 2017.

\bibitem[Sadhanala et~al.(2017)Sadhanala, Wang, Sharpnack, and
  Tibshirani]{sadhanala2017higher}
Veeranjaneyulu Sadhanala, Yu-Xiang Wang, James Sharpnack, and Ryan Tibshirani.
\newblock Higher-order total variation classes on grids: {M}inimax theory and
  trend filtering methods.
\newblock In \emph{Advances in Neural Information Processing Systems}, pages
  5800--5810, 2017.

\bibitem[Steidl et~al.(2006)Steidl, Didas, and Neumann]{stei06}
Gabriele Steidl, Stephan Didas, and Julia Neumann.
\newblock {Splines in higher order TV regularization}.
\newblock \emph{International Journal of Computer Vision}, 70\penalty0
  (3):\penalty0 241--255, 2006.

\bibitem[Tang et~al.(2014)Tang, Bhaskar, and Recht]{tang2014near}
Gongguo Tang, Badri~Narayan Bhaskar, and Benjamin Recht.
\newblock Near minimax line spectral estimation.
\newblock \emph{IEEE Transactions on Information Theory}, 61\penalty0
  (1):\penalty0 499--512, 2014.

\bibitem[Tibshirani(1996)]{tibs96}
Robert Tibshirani.
\newblock {Regression Shrinkage and Selection via the Lasso}.
\newblock \emph{J. R. Statist. Soc. B}, 58\penalty0 (1):\penalty0 267--288,
  1996.

\bibitem[Tibshirani(2014)]{tibs14}
Ryan Tibshirani.
\newblock Adaptive piecewise polynomial estimation via trend filtering.
\newblock \emph{Annals of Statistics}, 42\penalty0 (1):\penalty0 285--323,
  2014.

\bibitem[Tibshirani(2020)]{Ryan2020}
Ryan Tibshirani.
\newblock Divided differences, falling factorials, and discrete splines:
  {A}nother look at trend filtering and related problems.
\newblock \emph{arXiv preprint arXiv:2003.03886}, 2020.

\bibitem[van~de Geer(2016)]{vand16}
Sara van~de Geer.
\newblock \emph{{Estimation and Testing under Sparsity}}, volume 2159.
\newblock Springer, 2016.

\bibitem[van~de Geer(2020)]{vdG2020}
Sara van~de Geer.
\newblock Logistic regression with total variation regularization.
\newblock \emph{arXiv preprint arXiv:2003.02678}, 2020.
\newblock Tentatively accepted by {\it Transactions of A.\ Razmadze
  Mathematical Institute}.

\bibitem[Wang et~al.(2014)Wang, Smola, and Tibshirani]{wang2014falling}
Yu-Xiang Wang, Alex Smola, and Ryan Tibshirani.
\newblock The falling factorial basis and its statistical applications.
\newblock In \emph{International Conference on Machine Learning}, pages
  730--738, 2014.

\bibitem[Wang et~al.(2016)Wang, Sharpnack, Smola, and Tibshirani]{wang16}
Yu-Xiang Wang, James Sharpnack, Alex Smola, and Ryan Tibshirani.
\newblock {Trend filtering on graphs}.
\newblock \emph{Journal of Machine Learning Research}, 17:\penalty0 15--147,
  2016.

\end{thebibliography}

\appendix

\section{Proof of Theorem 2.2} \label{Thm2.2.proof}

Let $\e := Y - f_0$ be the noise vector. 
The proof of oracle inequalities starts from the following basic inequality.
\begin{lemma}[Basic inequality]\label{duallem31}
For all $f \in \R^n$ it holds that
$$ \norm{\hf-f^0}^2_n + \norm{\hf-f}^2_n \le \norm{f-f^0}^2_n + \frac{2 \e'(\hf-f)}{n} + 2 \lambda (\norm{Df}_1- \norm{D\hf}_1).$$
\end{lemma}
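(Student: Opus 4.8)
The plan is to exploit the exact quadratic structure of the least–squares part of the objective together with the first–order optimality of $\hat f$, and only at the very end to split $Y$ into signal plus noise. First I would record the algebraic identity obtained by expanding $\| (Y-\hat f)-(f-\hat f)\|_n^2$: for every $f\in\R^n$,
\[
\|Y-f\|_n^2 \;=\; \|Y-\hat f\|_n^2 \;-\; \tfrac{2}{n}(Y-\hat f)'(f-\hat f)\;+\;\|\hat f-f\|_n^2 .
\]
This is exact because $g\mapsto\|Y-g\|_n^2$ is quadratic with Hessian $\tfrac{2}{n}I$; it is the source of the extra term $\|\hat f-f\|_n^2$ in the statement.

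Next I would bound the cross term $\tfrac{2}{n}(Y-\hat f)'(f-\hat f)$ using that $\hat f$ minimizes the convex map $g\mapsto \phi(g):=\|Y-g\|_n^2+2\lambda\|Dg\|_1$. Plugging the convex combination $g_t:=(1-t)\hat f+tf$, $t\in(0,1]$, into $\phi$, expanding the square and using $\|Dg_t\|_1\le(1-t)\|D\hat f\|_1+t\|Df\|_1$ gives
\[
0\le \phi(g_t)-\phi(\hat f)\le -\tfrac{2t}{n}(Y-\hat f)'(f-\hat f)+t^2\|f-\hat f\|_n^2+2\lambda t\big(\|Df\|_1-\|D\hat f\|_1\big);
\]
dividing by $t$ and letting $t\downarrow 0$ yields the first–order inequality
\[
\tfrac{2}{n}(Y-\hat f)'(f-\hat f)\ \le\ 2\lambda\big(\|Df\|_1-\|D\hat f\|_1\big).
\]
Inserting this into the identity of the first paragraph gives the ``sharp'' intermediate bound (strong convexity of $\phi$ at its minimizer):
\[
\|Y-\hat f\|_n^2+\|\hat f-f\|_n^2+2\lambda\|D\hat f\|_1\ \le\ \|Y-f\|_n^2+2\lambda\|Df\|_1 .
\]

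Finally I would substitute $Y=f^0+\e$ and expand both least–squares terms, $\|Y-\hat f\|_n^2=\|f^0-\hat f\|_n^2+\tfrac{2}{n}\e'(f^0-\hat f)+\|\e\|_n^2$ and likewise with $\hat f$ replaced by $f$. The $\|\e\|_n^2$ terms cancel, the two $\e'$–terms combine to $\tfrac{2}{n}\e'(\hat f-f)$, and moving $2\lambda\|D\hat f\|_1$ to the right–hand side produces exactly the asserted inequality. The whole argument is routine bookkeeping; the only point deserving care is that the extra summand $\|\hat f-f\|_n^2$ on the left does \emph{not} appear if one merely substitutes $g=f$ into the minimizing property of $\hat f$, so one genuinely needs the first–order optimality condition. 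Using the convex–combination/directional–derivative device above sidesteps any discussion of the subdifferential of the composition $f\mapsto\|Df\|_1$, so I would present that route.
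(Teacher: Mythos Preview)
Your proof is correct. The paper itself does not give an argument here but simply cites Lemma~B.1 in \cite{orte19-2}, so there is no in-paper proof to compare against; your derivation via the directional-derivative/convex-combination device is one of the standard routes to the basic inequality and is fully rigorous as written.
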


\begin{proof}
See Lemma B.1 in \cite{orte19-2}.
\end{proof}

To derive  oracle results from the basic inequality, we have to control the increments of the empirical process given by ${ \e'(\hf-f)}/{n}$. 
Inspired by \cite{dala17}, we  decompose the increments of the empirical process into a part projected onto $\bar \N_{-S}$ and a remainder.
For $f \in \R^n$  we let $ w_{-S} (Df)_{-S} := \{ w_j( Df)_j \}_{j \notin \D \backslash S } $.

\begin{lemma}[Bound on the empirical process with mock variables.]\label{duallem38}
Choose $\lambda$ such that (\ref{lambda.equation}) holds and let $w_{-S}$ be as in Definition \ref{dualdef41}.
  Let $v>0$. It holds that with probability at least $1-e^{-u}-e^{-v}$,
  $$\frac{\e'f}{n}\le \left( \sqrt{\frac{\bar r_S}{n}}+ \sqrt{\frac{2v}{n}}  \right)\norm{f}_n + \lambda \norm{ w_{-S}
(Df)_{-S}}_1 , \ \forall f \in \R^n.$$
\end{lemma}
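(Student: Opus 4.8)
The plan is to control the empirical process term $\e'f/n$ by splitting $f$ along the orthogonal decomposition $f = f_{\bar \N_{-S}} + f_{\bar \N_{-S}^{\perp}}$ and treating the two pieces with two separate Gaussian concentration bounds, one for the projection onto the (augmented) null space and one for the anti-projection, which is the part the penalty has to overrule. \textbf{The projected part.} Since $\e \sim \N_n(0,I)$ and $\dim(\bar \N_{-S}) = \bar r_S$, the projection $\e_{\bar \N_{-S}}$ has $\norm{\e_{\bar \N_{-S}}}_2^2 \sim \chi^2_{\bar r_S}$, so by the concentration bound for chi-squared variables (Lemma 1 in \cite{laur00}) we have $\norm{\e_{\bar \N_{-S}}}_2 \le \sqrt{\bar r_S} + \sqrt{2v}$ with probability at least $1-e^{-v}$. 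On this event, for every $f$, Cauchy--Schwarz and $\norm{\cdot}_2 = \sqrt n \norm{\cdot}_n$ give
$$ \frac{\e' f_{\bar \N_{-S}}}{n} = \frac{\e_{\bar \N_{-S}}' f_{\bar \N_{-S}}}{n} \le \frac{\norm{\e_{\bar \N_{-S}}}_2 \norm{f}_2}{n} \le \left(\sqrt{\tfrac{\bar r_S}{n}} + \sqrt{\tfrac{2v}{n}}\right)\norm{f}_n . $$

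\textbf{The anti-projected part.} By the defining property of the dictionary, $f_{\bar \N_{-S}^{\perp}} = \Psi^{-S}(Df)_{-S} = \sum_{j \in \D\backslash S}(Df)_j\, \psi_j^{-S}$, so
$$ \frac{\e' f_{\bar \N_{-S}^{\perp}}}{n} = \frac1n\sum_{j \in \D\backslash S}(Df)_j\,\bigl(\e'\psi_j^{-S}\bigr) \le \frac1n\sum_{j \in \D\backslash S}\abs{(Df)_j}\,\abs{\e'\psi_j^{-S}} . $$
Each $\e'\psi_j^{-S}$ is $\N\bigl(0, \norm{\psi_j^{-S}}_2^2\bigr)$, hence $\e'\psi_j^{-S}/\norm{\psi_j^{-S}}_2$ is standard normal; a Gaussian tail bound together with a union bound over the $m-s$ indices in $\D\backslash S$ gives
$$ \pr\!\left(\max_{j \in \D\backslash S}\frac{\abs{\e'\psi_j^{-S}}}{\norm{\psi_j^{-S}}_2} > \sqrt n\,\lambda_0(u)\right) \le 2(m-s)\exp\!\left[-\tfrac12\bigl(2\log(2(m-s)) + 2u\bigr)\right] = e^{-u}, $$
which is exactly where the choice of $\lambda_0(u)$ with cardinality $m-s$ is used. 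On the complementary event, $\abs{\e'\psi_j^{-S}} \le \norm{\psi_j^{-S}}_2 \sqrt n\,\lambda_0(u) = n\norm{\psi_j^{-S}}_n\lambda_0(u)$ for all $j$, so, using $w_j = \norm{\psi_j^{-S}}_n\lambda_0(u)/\lambda$,
$$ \frac{\e' f_{\bar \N_{-S}^{\perp}}}{n} \le \sum_{j \in \D\backslash S}\norm{\psi_j^{-S}}_n\lambda_0(u)\abs{(Df)_j} = \lambda\sum_{j\in\D\backslash S} w_j\abs{(Df)_j} = \lambda\norm{w_{-S}(Df)_{-S}}_1 . $$

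\textbf{Combining.} Intersecting the two events with a union bound (they are in fact independent, being functions of the orthogonal Gaussian projections $\e_{\bar \N_{-S}}$ and $\e_{\bar \N_{-S}^{\perp}}$, but a union bound already suffices), with probability at least $1-e^{-u}-e^{-v}$ both displays hold simultaneously for every $f \in \R^n$, and adding them yields the asserted bound. The argument is essentially standard; the only points needing care are the bookkeeping of the $\sqrt n$ factors when passing between $\norm{\cdot}_2$ and $\norm{\cdot}_n$, and matching the threshold inside $\lambda_0(u)$ to the cardinality $m-s$ of the union bound so that the $\log(2(m-s))$ term is absorbed exactly. The one conceptual ingredient is that the dictionary vectors $\psi_j^{-S}$ lie in $\bar \N_{-S}^{\perp}$, which is precisely what lets the interaction of the noise with the penalty be controlled by a Gaussian maximal inequality uniformly over all $f$, exactly as in the Lasso analysis of \cite{dala17}.
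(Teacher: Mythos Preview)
Your proof is correct and follows essentially the same approach as the paper: the orthogonal decomposition $f=f_{\bar\N_{-S}}+f_{\bar\N_{-S}^{\perp}}$, the chi-squared concentration from \cite{laur00} for the projected part, and the Gaussian maximal inequality over the $m-s$ dictionary directions for the anti-projected part, combined by a union bound. The only cosmetic differences are that you spell out the probability computation in the Gaussian maximal inequality and remark on the independence of the two events, while the paper bounds $\norm{f_{\bar\N_{-S}}}_n$ by $\norm{f}_n$ explicitly rather than folding that into the Cauchy--Schwarz step as you do.
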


\begin{proof}[Proof of Lemma \ref{duallem38}]
We decompose the empirical process as
$$ \e'f/{n}=\e' f_{\bar \N_{-S}} /{n} + {\e' f_{\bar \N_{-S}^{\perp} } }/{n}.$$

\begin{itemize}
\item For $v>0$ define the set 
$$ \mathcal{V }:= \left\{ \norm{\e_{\bar \N_{-S}}}_n \le  \sqrt{\frac{\bar r_S}{n}}+ \sqrt{\frac{2v}{n}} 
 \right\}.$$
 By applying  Lemma 1 in \cite{laur00} (Lemma 8.6 in \cite{vand16}, a concentration inequality for $\chi^2$ random variables) to $\mathcal{V}$ we get that $\pr(\mathcal{V})\ge 1-e^{-v}$.

On $\mathcal{V}$ we have that
$$ \frac{\e' f_{\bar \N_{-S} } }{n}\le \norm{\e_{\bar \N_{-S}}}_n \norm{\bar f_{\N_{-S} } }_n  \le  \left( \sqrt{\frac{\bar r_S}{n}}+
\sqrt{\frac{2v}{n}}  \right) \norm{f}_n.$$

\item For $\lambda_0(u)=
\sqrt {(2 \log (2(m-s)) + 2u) /n } $ defined  as in (\ref{lambda0.equation})
define the set $$\mathcal{U}:= \left\{ \frac{\abs{\epsilon'{\psi}_j^{-S} }/n }{\norm{{\psi}_j^{-S}}_n}\le \lambda_0 (u), 
\ \forall \ j \in \D\setminus \tilde{S} \right\}.$$
We apply  a standard concentration inequality for the maximum of $m-s$ standard Gaussian random variables
(this can be found e.g.\ in Lemma 17.5 in \cite{vand16}) to deduce that
$ \pr(\mathcal{U})\ge 1-e^{-u}$. 

Since $f_{\bar \N_{-S}^{\perp}} = \Psi^{-S} (D f)_{-S} $ by the definition of the dictionary $\Psi^{-S}$,
it holds on 
$\mathcal{U}$ that
$${\e' f_{\bar \N_{-S}^{\perp} } }/{n}
\le \lambda \norm{{w}_{-{S}}(Df)_{-S}}_1$$
where
$$ w_j := \| \psi_j^{-S} \|_n \lambda_0 (u) / \lambda , \ j \in \D \backslash S .$$
 \end{itemize}
The proof of the lemma is completed by noting that $\pr(\mathcal{U}\cap \mathcal{V})\ge 1-e^{-u}-e^{-v}$.
\end{proof}

\begin{lemma}[A bound using $\Gamma^2(q_S , w_{-S}) $.]\label{duallem43}
Let $f\in \R^n $ and $S \subset \D $ be arbitrary, and $q_S := \text{sign}(Df)_S$. Then  for all $\tilde f \in \R^n $
$$ \| (Df )_S \|_1 - \| (D\tilde f )_S \|_1 - \| (1-w_{-S} )( D( \tilde f - f))_{-S} \|_1  \le \Gamma (q_S , w_{-S}) \| f - \tilde f \|_n  . $$
\end{lemma}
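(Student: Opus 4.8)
The plan is to deduce the bound directly from the definition of the noisy effective sparsity $\Gamma^2(q_S,w_{-S})$ in Definition \ref{dualdef41}, after two bookkeeping steps: a homogeneity argument and an unwinding of the sign conventions. First I would upgrade the defining maximization, which is taken over $\|g\|_n=1$, to an inequality valid for \emph{all} $g\in\R^n$: since $g\mapsto q_S'(Dg)_S-\|(1-w_{-S})(Dg)_{-S}\|_1$ is positively homogeneous, dividing by $\|g\|_n$ (when $g\neq0$; the case $g=0$ being trivial) gives
\[
q_S'(Dg)_S-\|(1-w_{-S})(Dg)_{-S}\|_1\le\Gamma(q_S,w_{-S})\,\|g\|_n,\qquad\forall\,g\in\R^n.
\]
I would then specialize this to $g=f-\tilde f$, so that the right-hand side already matches that of the claimed inequality.

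It then remains to check that the left-hand side of the lemma is dominated by $q_S'(D(f-\tilde f))_S-\|(1-w_{-S})(D(f-\tilde f))_{-S}\|_1$. The weighted $\ell_1$-terms on the non-active block coincide, because $\|(1-w_{-S})(D(\tilde f-f))_{-S}\|_1=\|(1-w_{-S})(D(f-\tilde f))_{-S}\|_1$ ($\ell_1$-norms are invariant under a global sign flip), so they cancel and I am left to prove
\[
\|(Df)_S\|_1-\|(D\tilde f)_S\|_1\le q_S'(Df)_S-q_S'(D\tilde f)_S.
\]
Here the choice $q_S=\text{sign}(Df)_S$ gives $q_S'(Df)_S=\|(Df)_S\|_1$, reducing the claim to $q_S'(D\tilde f)_S\le\|(D\tilde f)_S\|_1$, which holds term by term since $|q_j|=1$ for $j\in S$. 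Chaining these (in)equalities yields the lemma.

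I do not anticipate a genuine obstacle here: the statement is essentially a reformulation of the definition of $\Gamma(q_S,w_{-S})$, and the only mild subtlety is being careful with the direction of the sign flip ($\tilde f-f$ versus $f-\tilde f$) and with the fact that $q_S$ interpolates the signs of $(Df)_S$ and not those of $(D\tilde f)_S$, so that one gets an equality for the $\ell_1$-term on $S$ coming from $f$ but only an inequality for the one coming from $\tilde f$.
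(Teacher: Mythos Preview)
Your proposal is correct and follows essentially the same route as the paper: the paper uses $q_S'(Df)_S=\|(Df)_S\|_1$ and $q_S'(D\tilde f)_S\le\|(D\tilde f)_S\|_1$ to bound the $S$-block, then invokes Definition~\ref{dualdef41} directly. You spell out the homogeneity step and the sign-flip on the $-S$ block more explicitly, but the argument is the same.
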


\begin{proof} Clearly $\| (Df)_S \|_1  = q_S^{\prime} (Df)_S$. Moreover $\| ( D\tilde f )_S\|_1 \ge q_S ( D \tilde f)_S $.
Thus
$$ \| (D f)_{S} \|_1 - \| (D \tilde f )_{S} \|_1 \le q_S^{\prime} (D( f - \tilde f))_{S} .$$
The result now follows from the Definition \ref{dualdef41} of effective sparsity.
\end{proof}

\begin{proof}[Proof of Theorem \ref{dualthm45}]
By combining Lemma \ref{duallem31} and Lemma \ref{duallem38} we get that with probability at least $1-e^{-u}-e^{-v}$
\begin{eqnarray*}
 \norm{\hf-f^0}^2_n + \norm{\hf-f}^2_n & \le & \norm{f-f^0}^2_n + 
 2 \left(  \sqrt{\frac{\bar r_S}{n}} + \sqrt{\frac{2v}{n}} \right)\norm{\hf-f }_n \\ &+ & 2 \lambda \norm{w_{-S} D (\hf - f )_{-S} 
}_1 +   2 \lambda (\norm{Df}_1- \norm{D\hf}_1).
 \end{eqnarray*}
 
The theorem we are proving has a non-adaptive bound and an adaptive one.\\
$\bullet$ For establishing the non-adaptive bound, we 
note that $\norm{w_{-S} }_{\infty}\le 1$, so that 
\begin{eqnarray*}
\norm{w_{-S} (D (\hf - f ))_{-S} 
}_1 +     \norm{Df}_1- \norm{D\hf}_1 \le 2 \| Df \|_1 
\end{eqnarray*}
 and thus with probability at least $1-e^{-u}-e^{-v}$
\begin{eqnarray*}
 \norm{\hf-f^0}^2_n + \norm{\hf-f}^2_n & \le & \norm{f-f^0}^2_n +  
2 \left(  \sqrt{\frac{\bar r_S}{n}}+ \sqrt{\frac{2v}{n}}\right)\norm{\hf-f }_n \\ &+ & 
   4 \lambda \norm{Df}_1.
 \end{eqnarray*}
 The non-adaptive bound therefore follows from the conjugate inequality
 $2ab \le a^2 + b^2 $, $a,b \in \R$.\\
 $\bullet$ For the adaptive bound we apply the inequalities
 \begin{eqnarray*}
& & \norm{w_{-S} (D (\hf - f ))_{-S} 
}_1 +     \norm{Df}_1- \norm{D\hf}_1\\ &  \le & \| (1+ w_{-S})  (Df)_{-S} \|_1 + \| (Df)_S \|_1  
- \| (Df)_S \|_1 - \norm{(1- w_{-S} )(D\hf) _{-S} }_1 \\
& \le &2  \|   (Df)_{-S} \|_1 + \| (Df)_S \|_1  
- \| (D\hf)_S \|_1 - \norm{(1- w_{-S} )(D(\hf- f ) ) _{-S} }_1.
\end{eqnarray*}
Thus with probability at least $1-e^{-u}-e^{-v}$
\begin{eqnarray*}
& &  \norm{\hf-f^0}^2_n + \norm{\hf-f}^2_n \\ &\le& \norm{f-f^0}^2_n +  
 2 \left( \sqrt{\frac{\bar r_S}{n}} +\sqrt{\frac{2v}{n}} +\right)\norm{\hf-f }_n +
 4 \lambda   \|   (Df)_{-S} \|_1\\ &  + & 2\lambda \biggl (  \| (Df)_S \|_1  
- \| (D\hf)_S \|_1 - \norm{(1- w_{-S} )(D(\hf- f ) ) _{-S} }_1\biggr ) \\
 &\le &  \norm{f-f^0}^2_n +  
2  \left(  \sqrt{\frac{\bar r_S}{n}} +\sqrt{\frac{2v}{n}} \right)\norm{\hf-f }_n + 
 4 \lambda   \|   (Df)_{-S} \|_1\\ &  + & 2\lambda \Gamma (q_S , w_{-S} )\| \hf - f \|_n  
  \end{eqnarray*}
  where in the last inequality we used Lemma \ref{duallem43}.
 The proof is completed by applying again the conjugate inequality
  $2ab \le a^2 + b^2 $, $a,b \in \R$.
\end{proof}

\section{Proofs for Section 3.1} \label{dictionaryproofs.section}

\subsection{Proof for the result in Subsection \ref{dictionaryk.section}}

\begin{lemma}[Symmetry of $\norm{{\psi}^{\D}_j}_2$.]\label{symmetry.lemma}
Let $k \in [1: n-1]$ and $\Psi^{\cal D} = \Delta (k)^+$.
For all $j \in \D$ we have that
$$ \norm{\psi_j^{\D}}^2_2=\norm{\psi_{n+k+1-j}^{\D}}^2_2 .$$
\end{lemma}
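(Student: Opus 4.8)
The claim is a symmetry statement for the column lengths of the Moore--Penrose pseudo-inverse $\Delta(k)^+$: the $j^{\rm th}$ column has the same Euclidean length as the $(n+k+1-j)^{\rm th}$ column. The natural approach is to exhibit an explicit pair of orthogonal involutions (signed reversal permutations), one acting on $\R^n$ and one on $\R^{n-k}$, that conjugate $\Delta(k)$ into (plus or minus) itself, and then use the fact that the pseudo-inverse commutes with such conjugation. Concretely, let $J_n\in\R^{n\times n}$ be the reversal matrix $(J_n)_{ab}=\mathbb{1}\{a+b=n+1\}$ and similarly $J_{n-k}\in\R^{(n-k)\times(n-k)}$. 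The plan is first to check the identity
$$ J_{n-k}\,\Delta(k)\,J_n = (-1)^k\,\Delta(k), $$
which follows directly from the definition (\ref{Delta.equation}): reversing the order of both the rows and the columns of the banded Toeplitz matrix $\Delta(k)$ reverses the order of the band coefficients $(-1)^l\binom{k}{l}$, $l\in[0:k]$, and since $\binom{k}{l}=\binom{k}{k-l}$ the band is palindromic up to the sign factor $(-1)^k$ (because $(-1)^{k-l}=(-1)^k(-1)^{-l}$ and relabelling $l\mapsto k-l$ produces the overall $(-1)^k$).

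Next I would invoke the standard fact that for any matrix $A$ and orthogonal matrices $U,V$ of the appropriate sizes one has $(UAV)^+ = V'A^+U'$; applying this with $A=\Delta(k)$, $U=J_{n-k}$, $V=J_n$ (both symmetric and orthogonal, so $U'=U$, $V'=V$) together with the identity above gives
$$ \Delta(k)^+ = \bigl(J_{n-k}\,((-1)^k\Delta(k))\,J_n\bigr)^+ \cdot (-1)^k= (-1)^k\, J_n\,\Delta(k)^+\,J_{n-k}. $$
(Here I also used that scalars pull out of the pseudo-inverse as their reciprocal, and $((-1)^k)^{-1}=(-1)^k$.) Since the columns of $\Delta(k)^+$ are indexed by $\D=[k+1:n]$ and left-multiplication by $J_n$ is an isometry of $\R^n$ while right-multiplication by $J_{n-k}$ merely permutes the columns (sending column $j$ to the column in position mirrored within $\D$), this identity says precisely that $\psi^{\D}_j = \pm J_n\,\psi^{\D}_{n+k+1-j}$, and taking squared norms, using $\|J_n v\|_2=\|v\|_2$, yields $\norm{\psi_j^{\D}}_2^2=\norm{\psi_{n+k+1-j}^{\D}}_2^2$.

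The one bookkeeping point that needs care — and the only real obstacle — is getting the index arithmetic exactly right: one must verify that conjugating the $(n-k)\times n$ matrix $\Delta(k)$ by $J_{n-k}$ on the left and $J_n$ on the right does indeed land back on a matrix with the same banded structure (rather than some shifted version), and that the induced column permutation sends position $j$ (within the column index set $[k+1:n]$, equivalently $J_n$ maps basis vector $e_j$ to $e_{n+1-j}$, but the \emph{column} index set is all of $[1:n]$ with $\Delta(k)$ only ``seeing'' $[k+1:n]$ nontrivially) to position $n+k+1-j$. This is a direct but slightly delicate check against the explicit formula (\ref{Delta.equation}); once it is confirmed, the rest is immediate. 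An alternative, entirely elementary route that avoids the pseudo-inverse algebra is to note that $\psi^{\D}_j$ is characterized as the minimum-$\ell_2$-norm solution $x$ of $\Delta(k)x=e_j$ lying in the row space of $\Delta(k)$, and then observe that $J_n$ carries this constrained problem for index $j$ bijectively onto the one for index $n+k+1-j$ while preserving norms; I would present whichever of the two is shorter after checking the indices.
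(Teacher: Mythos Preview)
Your proof is correct and rests on the same underlying idea as the paper's: the reversal symmetry $J_{n-k}\,\Delta(k)\,J_n=(-1)^k\Delta(k)$. The execution, however, is cleaner than the paper's. The paper works with the explicit factorisation $\Delta(k)^+=\Delta(k)'(\Delta(k)\Delta(k)')^{-1}$, verifies the reversal relation $r_i=\pm\circlearrowright r_{n+1-i}$ for the columns of $\Delta(k)$ and, separately, the relation $P_j=\circlearrowright P_{n+k+1-j}$ for the columns of $P=(\Delta(k)\Delta(k)')^{-1}$, and then combines the two by an entry-wise computation. You bypass this decomposition entirely by invoking the standard identity $(UAV)^+=V'A^+U'$ for orthogonal $U,V$, which yields $\psi_j^{\D}=(-1)^kJ_n\psi_{n+k+1-j}^{\D}$ in one stroke. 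Your index check ($J_{n-k}$ on the row index set $[k+1:n]$ sends $i$ to $n+k+1-i$, and right-multiplication by $J_{n-k}$ permutes the columns of $\Delta(k)^+$ accordingly) is the only delicate point, and you have flagged it correctly; once that is written out, your argument is shorter and more transparent than the paper's.
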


\begin{proof}[Proof of Lemma \ref{symmetry.lemma}]
$$\text{For }r=\{r_i\}_{i=1}^n= \begin{pmatrix}r_1\\ \vdots \\ r_n\end{pmatrix}\in \R^n \text{ define }  \circlearrowright r= \{r_i\}_{i=n}^1= \begin{pmatrix}r_n\\ \vdots \\ r_1\end{pmatrix}.$$
Note that, for $r,\tilde{r} \in \R^n$ we have that $r'\tilde r= (\circlearrowright r) ' \circlearrowright \tilde r$.

Since $\Delta(k)$ is of full rank, we have that ${\Delta(k)}^+= {\Delta(k)}'(\Delta(k){\Delta(k)}')^{-1}.$

Let $r_i \in \R^{n-k}$ be the $i^{\text{th}}$ row of ${\Delta(k)}'$, i.e. ${\Delta(k)}'=\{r_i\}_{i=1}^n $. We observe that:
\begin{itemize}
\item For $k$ even, $r_i= \circlearrowright r_{n+1-i},\ i \in [n]$;
\item For $k$ odd, $r_i= - \circlearrowright r_{n+1-i},\ i \in [n]$.
\end{itemize}

Define $P:=(\Delta(k){\Delta(k)}')^{-1}\in \R^{(n-k)\times (n-k)}$ and let the rows and columns of $P$ be indexed by the set $\D=[n]\setminus [k]$. We have that $P$ is symmetric and all its diagonal entries are the same. Therefore, if we denote by $P_j$ the $j^{\text{th}}$ column of $P$, we note that $ P_j= \circlearrowright P_{n+k+1-j}, \ j \in \D$.

We distinguish two cases:
\begin{itemize}
\item When $k$ is even
\begin{eqnarray*}
\psi_{n+k+1-j}^{\D}&=&\{r_i P_{n+k+1-j}\}_{i=1}^n= \{r_i \circlearrowright P_j \}_{i=1}^n= \{\circlearrowright r_{n+1-i} \circlearrowright P_j\}_{i=1}^n\\
&=& \{ r_{n+1-i} P_j\}_{i=1}^n= \{r_iP_j\}_{i=n}^1 = \circlearrowright \{r_iP_j\}_{i=1}^n= \circlearrowright {\psi_j^\D}. 
\end{eqnarray*}

\item When $k$ is odd, by similar calculations $ \psi_{n+k+1-j}^{\D} = - \circlearrowright \psi_j^{\D}$.
\end{itemize}

Since, for $r \in \R^n$, $\norm{r}^2_2= \norm{\pm \circlearrowright r}^2_2$ we get the claim.
\end{proof}

To calculate the pseudo inverse of $\Delta(k)$ we proceed as follows (cf. Lemma 2.2 in \cite{orte19-2}).
\begin{enumerate}
\item We select the matrix $A(k) \in \R^{k \times n}$, s.t.
$$ A(k)_{ij}= \begin{dcases} (-1)^{j+i} \binom{i}{j}, & j=i-l, \ l \in \{0, \ldots, i-1\},\ i \in [k],\\
0, & \text{else}. \end{dcases}$$
\item We find $X^k= \begin{pmatrix} A(k)\\ \Delta(k) \end{pmatrix}^{-1} \in \R^{n \times n}$. We have that $X^k= \{\phi^k_j\}_{j \in [n]}$, where
\begin{itemize}
\item for $k=1$, $\phi^1_j= 1_{\{i \ge j\}}, \ i,\ j \in [n]$,
\item and for $k \ge 2$, 
$$ \phi_j^k= \begin{dcases} \phi^j_j, & 1 \le j <k\\
\sum_{l \ge j} \phi_l^{k-1}, & k \le j \le n.\end{dcases}$$
This is the falling factorial basis for equidistant design, see \cite{wang2014falling} and \cite{Ryan2020}].
In the notation for the general analysis problem (see the beginning of
Subsection \ref{dictionary.section})  $\{ \psi_j \}_{j=1}^n :=
\{ \phi_j^k \}_{j=1}^n$ is a complete dictionary.
\end{itemize}
\item Then $\Delta^+(k)= \{ \psi_j^{\D} \}_{j \in [k+1 : n]}$, where
$$ \psi_j^{\D}  = ( {\phi}^k_j)_{\N_{\D}^{\perp}} , j \in [k+1 :  n].$$
\end{enumerate}

\begin{proof}[Proof of Lemma \ref{duallem39}]
We  roughly estimate
$$ \norm{\psi_j^{\D}}_2^2 \le \norm{{\phi_j}^k}_2^2, \ ({n+k+1})/{2} \le 
j \le n ,$$
where
$$ \norm{{\phi_j}^k}^2_2 \le \int_{0}^{n+1-j} x^{2k-2} dx \le (n+1-j)^{2k-1}.$$
By symmetry (Lemma \ref{symmetry.lemma}), we  obtain the claim.
\end{proof}

\subsection{Proofs for the results in Subsection \ref{dictionary2.section}}
  
  \begin{proof}[Proof of Lemma \ref{duallem51}]
Let $\psi_1:\equiv 1\in \R^n$ and 
$$\psi_j: =\{(i-j+1)1_{\{i \ge j\}}\}_{ i \in [1:n]} , \ j \in [2:n] . $$
We want to find the anti-projections of the vectors $\psi_j,\ j\in [3:n]$ onto the linear space spanned by $\psi_1$ and $\psi_2$.

We use the Gram-Schmidt procedure to orthonormalize the basis on which we want to project.

By $u_1, u_2$ we denote two vectors orthogonal to each other, which span the linear span of $\psi_1,\ \psi_2$, and by $e_1, e_2$ their normalized version.
We take $u_1=\psi_1\equiv 1$. Then $ {e_1= n^{-1/2}}$.
We now take $u_2= \psi_2- \langle \psi_2, e_1 \rangle e_1$. We have that $\langle \psi_2, e_1 \rangle= \frac{n (n-1)}{2 n^{1/2}}$ and thus
$ u_2= \left\{(i-1)1_{\{i \ge 2\}}-\frac{n-1}{2} \right\}_{i=1}^n$. The norm of $u_2$ is $\norm{u_2}^2_2= \frac{(n+1)n (n-1)}{12}$ and it follows that
$$e_2=\sqrt{\frac{12}{(n+1)n (n-1)}} \left\{(i-1)1_{\{i \ge 2\}}-\frac{n-1}{2} \right\}_{i=1}^n .$$

Let $\bar{\psi}_j$ denote the projection of $\psi_j$ onto the linear span of $e_1, \ e_2$ and let $\psi_j^{\D}= \psi_j-\bar{\psi}_j$ be denote the anti-projection. It holds that
$$ \bar\psi_j= \langle \psi_j, e_1 \rangle e_1 + \langle \psi_j, e_2 \rangle e_2 \text{ and } \norm{\bar \psi _j}^2_2= \langle \psi_j, e_1 \rangle^2  + \langle \psi_j, e_2 \rangle^2.$$
Moreover by Pythagoras $ \norm{\psi_j^{\D}}^2_2= \norm{\psi_j}^2_2- \norm{\bar \psi_j}^2_2$.

To compute the length of the anti-projections we thus have to compute the coefficients of the projections onto the orthonormal vectors spanning the linear space we project onto (i.e. $\langle \psi_j, e_1 \rangle$ and $\langle \psi_j, e_2 \rangle$) and the lengths of the vectors to project (i.e. $\norm{\psi_j}^2_2$).

We omit all the steps of the computations, which were performed with the support of the software ``Wolfram Mathematica 11''. We present directly the results, that for the inner products $\langle \psi_j, e_1 \rangle$ and $\langle \psi_j, e_2 \rangle$ are

$$ \langle \psi_j, e_1 \rangle = \frac{(n-j+1)(n-j+2)}{2 \sqrt{n}},$$
$$ \langle \psi_j, e_2 \rangle = \sqrt{\frac{1}{12(n+1)n (n-1)}}(n-j+1)(n-j+2) (n+2j-3).$$
The length of the vectors to project is given by
$$ \norm{\psi_j}_2^2= \frac{(n-j+1)(n-j+2)(2n-2j+3)}{6}.$$
For the length of the projections we obtain the expression
$$ \norm{\bar{\psi_j}}^2_2= \frac{(n-j+1)^2(n-j+2)^2}{4n}\left(1+ \frac{(n+2j-3)^2}{3(n-1)(n+1)} \right).$$
For the length of the anti-projections we obtain the exact expression
$$ \norm{\psi_j^{\D}}^2_2= \frac{(n-j+1)(n-j+2)(j-2)(j-1)(2j(n-j+3)-3(n+1))}{6n(n+1)(n-1)}.$$
\end{proof}


\subsection{Proof for the result in Subsection \ref{dictionary3.section}}

\begin{proof}[Proof of Lemma \ref{duallem61}]
Let $\psi_1:\equiv 1$, $\psi_2:=\left\{ i-1 \right\}_{i \in [1:n]}$, 
$$\psi_j:=\left\{(i-j+1)(i-j+2)1_{\{i \ge j\}}/2\right\}_{i\in [1:n]}, \  j \in [2:n]. $$
The length of the anti-projections is given by
$$ \norm{\psi_j^{\D}}^2_2 = \norm{\psi_j}^2_2- \langle \psi_j, e_1 \rangle^2-\langle \psi_j, e_2 \rangle^2-\langle \psi_j, e_3 \rangle^2.$$

The orthonormal basis vectors $e_1$ and $e_2$ are the same as in the proof of Lemma \ref{duallem51}. Here as well the computations have been dome with the support of the software ``Wolfram Mathematica 11''. 
In a first step we want to find
$$u_3= \psi_3- \langle \psi_3, e_1 \rangle e_1 - \langle \psi_3, e_2 \rangle e_2$$
and its normalized version $e_3= u_3/\norm{u_3}_2$.

We use the Gram-Schmidt process.
%
We have that
\begin{eqnarray*}
&&\norm{\psi_j}^2_2= \sum_{i=1}^n 1_{\{i \ge j\}} \frac{(i-j+1)^2(i-j+2)^2}{4}\\
&=& \frac{(n+3-j)(n+2-j)(n+1-j)(10-12j+3j^2+12n-6jn+3n^2)}{60}.
\end{eqnarray*}
Moreover, for the coefficients of the projections onto 
$e_1$ 
 we have
$$ \langle \psi_j, e_1 \rangle= \frac{(n+3-j)(n+2-j)(n+1-j)}{6\sqrt{n}}.$$

For the coefficients of the projections onto 
$e_2$ 
we have that
$$ \langle \psi_j, e_2 \rangle= \frac{(n+3-j)(n+2-j)(n+1-j)(n+j-2)}{\sqrt{48(n+1)n(n-1)}}.$$
%
We thus obtain that the anti-projection of $\psi_3$ onto $\text{span}(\psi_1, \psi_2)$ is given by
$$ u_3= \psi_3-\langle \psi_3, e_1 \rangle e_1-\langle \psi_3, e_2 \rangle e_2= \left\{ \frac{(i-1)(i-n)}{2}+ \frac{(n-1)(n-2)}{12} \right\}_{i=1}^n.$$
The $\ell_2$-norm of $u_3$ is
$$ \norm{u_3}^2_2= \frac{(n+2)(n+1)n(n-1)(n-2)}{720}$$
and the third vector $e_3$ of the orthonormal basis writes as
\begin{eqnarray*}&&e_3= u_3/ \norm{u_3}_2\\
&=&\sqrt{\frac{720}{(n^2-4)(n^2-1)n}} \left\{\frac{(i-1)(i-n)}{2}+ \frac{(n-1)(n-2)}{12} \right\}_{i=1}^n.
\end{eqnarray*}

We can now compute the  coefficient of the projections of $\psi_j$ onto $e_3$:
$$ \langle \psi_j, e_3 \rangle = $$ $$ \frac{(n+3-j)(n+2-j)(n+1-j)(6j^2+3jn-24j+n^2-6n+20)}{\sqrt{720(n+2)(n+1)n(n-1)(n-2)}}.$$

 Combining the formulas for the quantities we found, we  get the claim.
\end{proof}

\section{Proofs for Section 3.3}\label{sparsityproofs.section} 
 
 \subsection{Proof for the result in Subsection \ref{effective-sparsityk.section}}

\begin{proof}[Proof of Lemma \ref{sqrtj.lemma}]
We have for $j \ge k$
\begin{eqnarray*}
 \Delta (k) j^{2k-1\over 2} &=& \sum_{l=0}^k { k \choose l} (-1)^l (j-l)^{2k-1 \over 2} \\
& = &j^{2k-1 \over 2} \biggl [ \sum_{l=0}^k {k \choose l} (-1)^l \biggl (1- { l \over j} \biggr )^{2k-1\over 2} \biggr ] . 
\end{eqnarray*} 
We do a $(k-1)$-term Taylor expansion of $x \mapsto (1- x)^{2k-1 \over 2} $ around $x=0$: 
$$ 
(1- x)^{2k-1 \over 2} =\sum_{i=0}^{k-1} a_i x^i + {\rm rem} (x) ,$$
where $a_0=1$, $a_1= -{2k-1 \over 2} , \cdots$ are the coefficients of the
Taylor expansion and where the remainder ${\rm rem} (x)$ satisfies for some constant ${\rm C}_k$
$$ \sup_{0 \le x \le 1/2 } | {\rm rem} (x) |\le {\rm C}_k | x |^k . $$
Thus
 \begin{eqnarray*}
 & &    \sum_{l=0}^k { k \choose l} (-1)^l\biggl (1- { l \over j} \biggr )^{2k-1\over 2} \\
 & =& \sum_{l=0}^k { k \choose l} (-1)^l \sum_{{\rm k}=0}^{k-1} a_i \biggl ({ l\over j } \biggr )^i + \sum_{l=0}^k { k \choose l} (-1)^l {\rm rem} \biggl ( {l \over j} \biggr ) \\
 & = & \underbrace{\Delta (k)  {\bf p} }_{=0} +  \sum_{l=0}^k  { k \choose l} (-1)^l  {\rm rem}\biggl ( {l \over j} \biggr ) ,
\end{eqnarray*} 
where
$$ {\bf p} = \biggl \{ (-1)^k \sum_{i=0}^{k-1} { a_i  } \biggl ( { l\over j } \biggr )^i \biggr  \}_{l=0, \ldots , k-1  }$$
is a polynomial of degree $k-1$ and hence $\Delta (k) {\rm p} =0 $.
 It follows that for $j \ge 2k$, 
$$ \biggl | \sum_{l=0}^k { k\choose l} (-1)^l\biggl (1- { l \over j} \biggr )^{2k-1\over 2}\biggr | \le 
 \sum_{l=0}^k  { k \choose l} \biggl | {\rm rem} \biggl ( {l \over j} \biggr ) \biggr |    \le \tilde {\rm C}_k {1 \over j^k } .$$
But then for $j \ge 2k$
 $$ \Delta (k)  j^{2k-1 \over 2}  \le \tilde {\rm C}_k {1 \over \sqrt j } . $$
 So
 $$ \sum_{j=2k}^d | \Delta (k)  j^{2k -1 \over 2}  |^2 \le \tilde {\rm C}_k^2 (1 +\log d) . $$
 Finally, for $k \le j < 2k$,
 $$ \Delta (k)  j^{2k-1 \over 2} \le   j^{2k-1 \over 2 } \sum_{l=0}^k { k \choose l}  
  \le 2^k k^{2k-1 \over 2} .$$
 Thus
 $$\sum_{j=k}^d | \Delta (k) j^{2k -1 \over 2}  |^2 \le (2k)^{2k} + \tilde {\rm C}_k^2 (1+  \log  d )\le
 \tilde C_k^2 (1+ \log d) $$
 for some constant $\tilde C_k $.
 \end{proof}
 
 \subsection{Proof of the result in Subsection \ref{effective-sparsity3.section}}

\begin{proof}[Proof of Lemma \ref{q3.lemma}]
First
\begin{eqnarray*}
 \Delta(2) {\bf q}_{d+2}& = &-  {\bar a_0 [\Delta(2) (d+2)^{5/2} ] \over d^{5/2}}  = - { \bar a_0 \alpha_2  \over d^2}\\
 & = & -{1 \over d^2} {\alpha_2 \gamma_2 \over \gamma_2 - \alpha_2 + (\gamma_1 \alpha_2+ \alpha_1 \gamma_2 ) } \\
\Delta(2) {\bf p}_{d+2} &=& -{ \bar a_3 [\Delta (2) d^3 ]  \over d^3}  = -{ \bar a_3 \gamma_2 \over d^2} =  - {1 \over d^2} { \alpha_2 \gamma_2  \over \gamma_2 - \alpha_2 + (\gamma_1 \alpha_2- \alpha_1 \gamma_2 ) }\  . 
\end{eqnarray*}
Second
\begin{eqnarray*}
 \Delta {\bf q}_{d+1} &= &- { \bar a_0 [\Delta (d+1)^{5/2} ] \over d^{5/2} } =
-{ \bar a_0 \alpha_1 \over d} = - {1 \over d} {\gamma_2 \alpha_1 \over \gamma_2 - \alpha_2 + (\gamma_1 \alpha_2+ \alpha_1 \gamma_2 ) }\\
 \Delta {\bf p}_{d+1} &=&  {\bar a_3 [\Delta d^3 ] \over d^3 } - {\bar a_1 \over d} 
= { \bar a_3 \gamma_1 \over d } - {\bar a_1 \over d} \\
& =&{1 \over d}  { \alpha_2 \gamma_1 -  (\gamma_1 \alpha_2+ \alpha_1 \gamma_2)  \over \gamma_2 - \alpha_2 + (\gamma_1 \alpha_2 + \alpha_1 \gamma_2 ) } =
-{1 \over d} { \alpha_1 \gamma_2  \over \gamma_2 - \alpha_2 + (\gamma_1 \alpha_2 + \alpha_1 \gamma_2 ) }\ .
\end{eqnarray*} 
Finally
\begin{eqnarray*}
 {\bf q}_d &=& 1- {\bar a_0 } = 1-  {\gamma_2\over \gamma_2 - \alpha_2 + (\gamma_1 \alpha_2+ \alpha_1 \gamma_2 ) }  \\
&=&
{ \gamma_1 \alpha_2+ \alpha_1 \gamma_2  -\alpha_2  \over \gamma_2 - \alpha_2 + (\gamma_1 \alpha_2 + \alpha_1 \gamma_2 ) }\\
{\bf  p}_d &= & - \bar a_3 +\bar a_1 =  { \gamma_1 \alpha_2+ \alpha_1 \gamma_2  -\alpha_2  \over \gamma_2 - \alpha_2 + (\gamma_1 \alpha_2 + \alpha_1 \gamma_2 ) }\ .
\end{eqnarray*}
\end{proof}

\end{document}